\DeclareFontFamily{OT2}{cmr}{\hyphenchar\font45}
\DeclareFontShape{OT2}{cmr}{n}{l}{%
<5><6><7><8><9>gen*wncyr%
<10><10.95><12><14.4><17.28><20.74><24.88>wncyr10}{}
\DeclareMathAlphabet{\mathcyr}{OT2}{cmr}{n}{l}
\newtheorem{thm}{Theorem}[section]
\newtheorem*{thm*}{Theorem}
\newtheorem{lem}[thm]{Lemma}
\newtheorem{cor}[thm]{Corollary}
\theoremstyle{definition}
\newtheorem{defn}[thm]{Definition}
\newtheorem{ex}[thm]{Example}
\theoremstyle{remark}
\newtheorem{rem}[thm]{Remark}
\DeclareMathOperator{\up}{\overset{\scalebox{0.8}[0.4]{$\diagup$}}{}}
\DeclareMathOperator{\down}{\overset{\scalebox{0.8}[0.4]{$\diagdown$}}{}}
\DeclareMathOperator{\open}{\substack{\scalebox{0.8}[0.4]{$\diagup$}\\\scalebox{0.8}[0.4]{$\diagdown$}}}
\DeclareMathOperator{\close}{\substack{\scalebox{0.8}[0.4]{$\diagdown$}\\\scalebox{0.8}[0.4]{$\diagup$}}}
\begin{document}

\title{Integral expressions for Schur multiple zeta values}

\author{Minoru Hirose}
\address[Minoru Hirose]{Institute for Advanced Research, Nagoya University,  Furo-cho, Chikusa-ku, Nagoya, 464-8602, Japan}
\email{minoru.hirose@math.nagoya-u.ac.jp}

\author{Hideki Murahara}
\address[Hideki Murahara]{The University of Kitakyushu,  4-2-1 Kitagata, Kokuraminami-ku, Kitakyushu, Fukuoka, 802-8577, Japan}
\email{hmurahara@mathformula.page}

\author{Tomokazu Onozuka}
\address[Tomokazu Onozuka]{Institute of Mathematics for Industry, Kyushu University 744, Motooka, Nishi-ku, Fukuoka, 819-0395, Japan} \email{t-onozuka@imi.kyushu-u.ac.jp}

\keywords{Multiple zeta(-star) values; Schur multiple zeta values; Iterated integral expressions; Duality relations}
\subjclass[2010]{Primary 11M32; Secondary 05A19}

\begin{abstract}
Nakasuji, Phuksuwan, and Yamasaki defined the Schur multiple zeta values and gave iterated integral expressions of the Schur multiple zeta values of the ribbon type. 
This paper generalizes their integral expressions to the ones of more general Schur multiple zeta values having constant entries on the diagonals. Furthermore, we also discuss the duality relations for Schur multiple zeta values obtained from the integral expressions.
\end{abstract}

\maketitle

\section{Introduction}
\subsection{Schur multiple zeta values}
Let $\mathbb{N}=\{1,2,3,\dots\}$ be the set of positive integers. A
Young diagram is a finite subset $\lambda$ of $\mathbb{N}^{2}$ satisfying
\[
(i+1,j)\in\lambda\text{ or }(i,j+1)\in\lambda\Rightarrow(i,j)\in\lambda
\]
for $(i,j)\in\mathbb{N}^{2}$. Furthermore, we say that $\lambda\subset\mathbb{N}^{2}$
is a skew Young diagram if there exist Young diagrams $\lambda'$
and $\lambda''$ such that $\lambda'\subset\lambda''$ and $\lambda''\setminus\lambda'=\lambda$.
Hereafter, we fix a skew Young diagram $\lambda$. We define the set of semi-standard
Young tableaux $\mathrm{SSYT}(\lambda)$ as the set of maps $f$ from
$\lambda$ to $\mathbb{N}$ satisfying the conditions:
\begin{itemize}
\item If $(i,j),(i,j+1)\in\lambda$ then $f(i,j)\leq f(i,j+1)$,
\item If $(i,j),(i+1,j)\in\lambda$ then $f(i,j)<f(i+1,j)$.
\end{itemize}
We call a map $\boldsymbol{k}:\lambda\to\mathbb{N}$ an index for
$\lambda$. Furthermore, we say that $\bm{k}$ is admissible if $\bm{k}(i,j)\geq2$
for any corner $(i,j)\in\lambda$, and denote by $T(\lambda)$ the
set of admissible indices for $\lambda$. Here, a corner means an
element $(i,j)$ of $\lambda$ such that $(i+1,j)\notin\lambda$ and
$(i,j+1)\notin\lambda$. For an admissible index $\bm{k}$, the Schur
multiple zeta value (SMZV) \cite{NPY18} is defined by
\[
\zeta(\bm{k})=\sum_{f\in\mathrm{SSYT}(\lambda)}\frac{1}{\prod_{(i,j)\in\lambda}f(i,j)^{\bm{k}(i,j)}}.
\]
A skew Young diagram is depicted as the collection of boxes and 
an index for a skew Young diagram is depicted as the collection of boxes filled
with numbers (see Example \ref{example}).
Note that SMZVs are the common generalization of multiple zeta and zeta-star values
\begin{align*}
 \zeta(k_1,\dots, k_r)
 =\sum_{1\le n_1<\cdots <n_r} \frac{1}{n_1^{k_1}\cdots n_r^{k_r}}, \quad
 \zeta^\star (k_1,\dots, k_r)
 =\sum_{1\le n_1\le \cdots \le n_r} \frac{1}{n_1^{k_1}\cdots n_r^{k_r}}.
\end{align*} 
\begin{ex}\label{example}
 Let
 $\lambda=\{(1,2),(1,3),(2,1),(2,2)\}$, then
 $\lambda$  is depicted as
 $
 \ {\footnotesize \ytableausetup{centertableaux, boxsize=1.2em}
 	\begin{ytableau}
 	 \none & {} & {}\\
 	 {} & {} 
 	\end{ytableau}}\
 $. 
 By putting $
 \boldsymbol{k}=
 \ {\footnotesize \ytableausetup{centertableaux, boxsize=1.2em}
 	\begin{ytableau}
 	 \none & {1} & {3}\\
 	 {4} & {2} 
 	\end{ytableau}}\
 $, we find
 \begin{align*}
  \zeta\left(
  \ {\footnotesize \ytableausetup{centertableaux, boxsize=1.2em}
   \begin{ytableau}
    \none & 1 & 3 \\
 	4 & 2  & \none
   \end{ytableau}}\
  \right)
  =\sum_{\substack{ {\;\;\;\;\;\;\;\; } \; \quad n_{1,2}\le n_{1,3} \\  \quad \text{\;\rotatebox{90}{$\quad$}} 
   \quad\; \text{\rotatebox{90}{$>$}}\qquad \\ \quad n_{2,1} \le n_{2,2}\qquad }}
   \frac{1}{ n_{1,2} n_{1,3}^3n_{2,1}^4 n_{2,2}^{2} }.
 \end{align*}
\end{ex}

We say that $\lambda$ is connected if all boxes are edge-connected,
i.e., there is only one $(i,j)\in\lambda$
satisfying $(i-1,j)\notin\lambda$ and $(i,j+1)\notin\lambda$.
In this paper, we only consider the case where $\lambda$ is connected for simplicity.

\subsection{Yamamoto's integral expression}
Yamamoto used 2-labeled posets to represent iterated integrals of multiple zeta and zeta-star values (see  \cite{Yam17}).
We call a pair $(X,\delta_X)$ 2-labeled poset if $X=(X,\le)$ is a finite partially ordered set and $\delta_X$ is a map from $X$ to $\{\circ, \bullet\}$.
A 2-labeled poset $(X, \delta_X)$ is called admissible if $\delta_X(p)=\circ$ for all maximal elements $p\in X$ and $\delta_X(q)=\bullet$ for all minimal elements $q\in X$. 
Sometimes, a 2-labeled poset is depicted by a Hasse diagram.
For example, the diagram
\begin{align*}
X_1=
\begin{xy}
{(8,-4) \ar @{{*}}^{p_1}},
{(8,-4) \ar @{-} (12,0)},
{(12,0) \ar @{{*}}^{p_3}},
{(8,-4) \ar @{-} (12,-8)},
{(12,-8) \ar @{{*}}_{p_2}},
{(12,0) \ar @{-} (16,4)},
{(16,4) \ar @{o}^{p_5}},
{(12,0) \ar @{-} (16,-4)},  
{(12,-8) \ar @{-} (16,-4)},  
{(16,-4) \ar @{o}_{p_4}},
{(16,4) \ar @{-} (20,0)}, 
{(20,0) \ar @{{*}}^{p_6}},
{(16,-4) \ar @{-} (20,0)}
\end{xy}\ 
\end{align*}
represents the admissible 2-labeled poset $X_1 = \{p_1, \ldots, p_6\}$ with order $p_6>p_4>p_2<p_1<p_3<p_5>p_6$ and $p_3>p_4$
and $(\delta_{X_1} (p_1),\ldots,\delta_{X_1} (p_6)) = (\bullet, \bullet, \bullet, \circ, \circ, \bullet)$.

We denote by $\mathcal{U}$ the set of closed intervals contained in
$[0,1]$.
For any poset $X$ and $U=(U_{p})_{p\in X}\in\mathcal{U}^{X}$, we set
\begin{align*}
 \Delta_{X,U}=\{(t_p)_{p\in X} \in \prod_{p\in X}U_{p} \mid t_p<t_q \text{ if } p<q\}.
\end{align*}
Then, for an admissible 2-labeled poset $(X,\delta_X)$ and $U=(U_{p})_{p\in X}\in\mathcal{U}^{X}$, the associated integral  is  defined by
\begin{align*}
 I(X,U):=\int_{\Delta_{X,U}}\prod_{p \in X}\omega_{\delta_X(p)}(t_p),
\end{align*}
where
\begin{align*}
 \omega_\circ(t):=\frac{dt}{t} \;\textrm{ and }\; \omega_\bullet(t):=\frac{dt}{1-t}.
\end{align*}
Furthermore, we put
\[
I(X)\coloneqq I(X,([0,1])_{p\in X}).
\]
Note that the integral $I(X)$ converges if and only if the 2-labeled poset $X$ is admissible.
For example, we have
\begin{align*}
 I(X_1)
 =\int_{ \substack{t_1,\ldots,t_6\in(0,1)\\ t_{6}>t_{4}>t_{2}<t_{1}<t_{3}<t_{5}>t_{6}\\ t_{3}>t_{4}}}
  \frac{dt_{1}}{1-t_{1}}\frac{dt_{2}}{1-t_{2}}\frac{dt_{3}}{1-t_{3}}
  \frac{dt_{4}}{t_{4}}\frac{dt_{5}}{t_{5}}\frac{dt_{6}}{1-t_{6}}.
\end{align*}

\subsection{Main theorem}
Put 
\[
T^{\mathrm{diag}}(\lambda)=\{\bm{k}\in T(\lambda):\bm{k}(i,j)=\bm{k}(i+1,j+1)\text{ if }(i,j)\in\lambda\text{ and }(i+1,j+1)\in\lambda\}.
\]
We say that $\lambda$ is ribbon type if $T^{\mathrm{diag}}(\lambda)=T(\lambda)$.
Nakasuji, Phuksuwan, and Yamasaki gave Yamamoto's integral expression for the ribbon-type SMZVs in \cite{NPY18}.
The following is the main theorem of this paper, which generalizes the integral expression for the ribbon-type SMZVs.
See Theorem \ref{main2} for a more rigorous formulation of the theorem.

\begin{thm} \label{main1}
 If $\boldsymbol{k}\in T^{\mathrm{diag}}(\lambda)$, then the SMZV 
 $\zeta(\boldsymbol{k})$ has Yamamoto's integral expression, i.e., we have
  \begin{align*}
  \zeta\left(
  \ {\footnotesize \ytableausetup{centertableaux, boxsize=1.2em}
  \begin{ytableau}
  \none & \none & \none & \none &  k_1   \\
  \none &  k_i & \none & \none[\iddots] & \none \\
  \none & \none & \none[\ddots] \\
  \none & \none[\iddots] & \none & k_i  \\
  k_r & \none & \none
 \end{ytableau}}\
\right)=I\left(\ 
\begin{xy}
{(0,-6) \ar @{{*}-o} (4,-2)}, 
{(4,-2) \ar @{.o} (8,2)}, 
{(0,-5) \ar @/^2mm/ @{-}^{k_1} (7,2)}, 
{(8,2) \ar @{.} (12,6)},  
{(8,2) \ar @{.} (12,-2)}
\end{xy}
\cdots
\begin{xy}
{(32,4) \ar @{{*}-o} (36,8)},   
{(32,4) \ar @{{*}-o} (36,0)},   
{(32,-4) \ar @{{*}-} (36,0)},   
{(32,-4) \ar @{.} (36,-8)},   
{(32,-12) \ar @{{*}-o} (36,-8)},   
{(32,4) \ar @/_2mm/ @{-}_{l_i} (32,-11)}, 
{(36,8) \ar @{.o} (40,12)},   
{(36,8) \ar @{.o} (40,4)},   
{(36,0) \ar @{.} (40,4)},   
{(36,0) \ar @{.} (40,-4)},   
{(36,-8) \ar @{.o} (40,-4)}, 
{(32,5) \ar @/^2mm/ @{-}^{k_i} (39,12)}, 
\end{xy}\ 
\cdots
\begin{xy}
{(44,2) \ar @{.} (48,-2)},   
{(44,-6) \ar @{.{*}} (48,-2)},   
{(48,-2) \ar @{-o} (52,2)}, 
{(52,2) \ar @{.o} (56,6)}, 
{(48,-1) \ar @/^2mm/ @{-}^{k_r} (55,6)}, 
\end{xy}\ 
\right),
\end{align*}
where $l_i$ is the number of $k_i$'s.
\end{thm}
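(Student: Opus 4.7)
The plan is to verify the equality $I(X) = \zeta(\bm{k})$ by evaluating the right-hand integral as a convergent multiple series and matching it with the defining sum on the left. To each box $(i,j) \in \lambda$ we associate a specific minimal element $p_{i,j} \in X$ (a ``$\bullet$'') together with a collection of $\bm{k}(i,j)-1$ circles lying above it in the Hasse diagram. The diagonal-constancy hypothesis $\bm{k}(i,j) = \bm{k}(i+1,j+1)$ is precisely what allows the chains attached to each diagonal of length $l_i$ and common exponent $k_i$ to be stitched into the rectangular block depicted in the theorem; for a general admissible $\bm{k}$, the poset on the right-hand side would not even be well defined.

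The computation proceeds by expanding $\omega_\bullet(t_p) = \sum_{n_p \geq 1} t_p^{n_p-1}\,dt_p$ at every bullet and then integrating each variable in order from the bottom of the poset upward. Along a simple chain of $k-1$ circles above a bullet, iterated integration propagates factors of $1/m$ via $\int_0^s t^{m-1}\,dt = s^m/m$ combined with $\omega_\circ(t)=dt/t$, eventually producing $1/m^k$ where $m$ is the cumulative sum of bullet-indices appearing below. At the diamond-shaped meeting points of the block structure (where two adjacent bullets on the same diagonal share a common circle above them) convolution identities such as $\sum_{n_1,n_2 \geq 1} 1/(n_1 n_2 (n_1 + n_2)) = \zeta^\star(1,2)$ and their higher-order analogues will allow the integral to be re-summed as a Dirichlet series whose summation variables reorganize into the values $f(i,j)$ of a tableau.

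The critical step is to show that the resulting series is precisely $\sum_{f \in \mathrm{SSYT}(\lambda)} \prod_{(i,j)} 1/f(i,j)^{\bm{k}(i,j)}$. This amounts to matching the semi-standard inequalities to the poset order of $X$: the strict inequalities $f(i,j) < f(i+1,j)$ along columns of $\lambda$ should arise from bullets separated by an intermediate $\circ$-$\bullet$ ascent (each such ascent contributes an extra $n \geq 1$ to the cumulative sum), while the weak inequalities $f(i,j) \leq f(i,j+1)$ along rows should be produced, after the appropriate resummation, by the convolution structure at the diamond meeting points.

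The main obstacle is this last matching, and in particular the combinatorics of how the diamond meeting points produce $\leq$ rather than $<$. A natural path forward is to proceed by induction on the number of diagonals of $\lambda$ (or on their lengths), peeling off one diagonal at a time and using the ribbon-type result of \cite{NPY18} as the base case. At each inductive step the reduction should rely on a localized identity for the integral over the ``top block'' of the poset, and the diagonal-constancy of $\bm{k}$ is exactly what keeps these block computations consistent across the induction; without it, the inner convolutions and the outer chain-integrations would fail to combine into the Schur sum.
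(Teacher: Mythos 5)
Your overall shape---expand each $\omega_\bullet$ as a geometric series, integrate up the poset column by column, and match the result against the SSYT sum---is the right one, and your opening observations (the diagonal-constancy is what makes the poset well defined; a simple chain of $k-1$ circles over a bullet produces a factor $1/m^{k}$) are correct. But the step you yourself flag as ``the main obstacle'' is exactly where the whole proof lives, and the idea that resolves it is absent from your outline. The paper closes the argument by enlarging \emph{both} sides to deformed families: an integral $J(d_1r_1\cdots d_{s+1};t_1,\dots,t_m)$ in which the last column of the poset is integrated over the interlaced subintervals $[t_j,t_{j+1}]$, and a series $\zeta(k_1r_1\cdots k_{s+1};t_1,\dots,t_j)$ whose summand carries the determinant $F\binom{t_1,\dots,t_j}{n_1,\dots,n_{j-1}}$ of powers of the endpoints. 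The technical heart is then Lemma \ref{lem2}: integrating such a determinant against $\prod dt_i/t_i$ or $\prod dt_i/(1-t_i)$ over an interlaced domain returns a determinant of the same shape, and it is the row operations \emph{inside the determinant} that produce the truncated sums $n_1<m_1\le n_2<m_2\le\cdots$, i.e.\ precisely the weak-row/strict-column inequalities of a semistandard tableau. This is a signed, Lindstr\"om--Gessel--Viennot-type cancellation; the object handed from one column of the poset to the next is a determinant, not a product of independent series.

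Your proposed substitute---convolution identities such as $\sum_{n_1,n_2\ge1}1/(n_1n_2(n_1+n_2))=\zeta^\star(1,2)$ at the diamond meeting points---does not supply this mechanism. Convolving the branches below a meeting point yields sums over essentially unordered tuples, and without the alternating-sign determinant there is no visible way these reorganize into the interlaced SSYT conditions; asserting that ``higher-order analogues will allow the integral to be re-summed'' is exactly the unproved claim. Relatedly, the induction you propose (peel off one diagonal, base case the ribbon result of \cite{NPY18}) does not close as stated: after removing the outer block, what remains is not $I(P(u))$ for a smaller shape but an integral with free interior endpoints, and the remaining sum is not the Schur sum of a smaller diagram but its $t$-deformation with the determinant factor. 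One must formulate the identity at the level of these deformed objects and induct there (the paper inducts on the total weight, using the recursions \eqref{relation1} for $J$ and \eqref{relation2} for the deformed $\zeta$). Until you specify the deformed intermediate object, the ``localized identity for the top block'' you appeal to is not even defined, so the argument has a genuine gap at its central step.
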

\begin{rem}
This remark gives precise explanations of Theorem \ref{main1}.
There are four cases to connect adjacent cells  of the skew Young diagram.
Looking at these four cases, we can clearly describe the correspondence between the SMZV and its iterated integral expression: 
\begin{align*}
\zeta\left(
\ {\footnotesize \ytableausetup{centertableaux, boxsize=1.2em}
	\begin{ytableau}
	\none & \none & \none & \none &  k_1   \\
	\none &  a & \none & \none[\iddots] & \none \\
	\none & b & \none[\ddots] \\
	\none & \none & \none[\ddots] & a  \\
	\none     & \none[\iddots] & \none & b \\
	k_r
	\end{ytableau}}\
\right)=I\left(\ 
\begin{xy}
{(0,-6) \ar @{{*}-o} (4,-2)}, 
{(4,-2) \ar @{.o} (8,2)}, 
{(0,-5) \ar @/^2mm/ @{-}^{k_1} (7,2)}, 
{(8,2) \ar @{.} (12,6)},  
{(8,2) \ar @{.} (12,-2)}
\end{xy}
\cdots\ 
\begin{xy}   
{(32,-2) \ar @{{*}-o} (36,2)},   
{(32,-2) \ar @{{*}-o} (36,-6)},   
{(32,-10) \ar @{{*}-} (36,-6)},   
{(32,-10) \ar @{.} (36,-14)},   
{(32,-18) \ar @{{*}-o} (36,-14)},   
%
{(36,2) \ar @{.o} (40,6)},   
{(36,2) \ar @{.o} (40,-2)},   
{(36,-6) \ar @{.} (40,-2)},   
{(36,-6) \ar @{.} (40,-10)},   
{(36,-14) \ar @{.o} (40,-10)}, 
{(32,-1) \ar @/^2mm/ @{-}^{a} (39,6)}, 
{(40,6) \ar @{-{*}} (44,10)},  
{(40,6) \ar @{-{*}} (44,2)},   
{(40,-2) \ar @{-} (44,2)},   
{(40,-2) \ar @{.} (44,-6)},   
{(40,-10) \ar @{-{*}} (44,-6)},
{(44,10) \ar @{{*}-o} (48,14)},   
{(44,10) \ar @{{*}-o} (48,6)},   
{(44,2) \ar @{{*}-} (48,6)},   
{(44,2) \ar @{.} (48,-2)},   
{(44,-6) \ar @{{*}-o} (48,-2)},   
{(48,14) \ar @{.o} (52,18)},   
{(48,14) \ar @{.o} (52,10)},   
{(48,6) \ar @{.} (52,10)},   
{(48,6) \ar @{.} (52,2)},   
{(48,-2) \ar @{.o} (52,2)}, 
{(44,11) \ar @/^2mm/ @{-}^{b} (51,18)}, 
\end{xy}\ 
\cdots
\begin{xy}
{(44,2) \ar @{.} (48,-2)},   
{(44,-6) \ar @{.{*}} (48,-2)},   
{(48,-2) \ar @{-o} (52,2)}, 
{(52,2) \ar @{.o} (56,6)}, 
{(48,-1) \ar @/^2mm/ @{-}^{k_r} (55,6)}, 
\end{xy}\ 
\right),
\end{align*}

 \begin{align*}
\zeta\left(
\ {\footnotesize \ytableausetup{centertableaux, boxsize=1.2em}
	\begin{ytableau}
	\none & \none & \none & \none & \none &  k_1   \\
	\none & b &  a & \none & \none[\iddots] & \none \\
	\none & \none & \none[\ddots] & \none[\ddots] \\
	\none & \none & \none & \none[\ddots] & a  \\
	\none     & \none[\iddots] & \none & \none & b \\
	k_r
	\end{ytableau}}\
\right)=I\left(\ 
\begin{xy}
{(0,-6) \ar @{{*}-o} (4,-2)}, 
{(4,-2) \ar @{.o} (8,2)}, 
{(0,-5) \ar @/^2mm/ @{-}^{k_1} (7,2)}, 
{(8,2) \ar @{.} (12,6)},  
{(8,2) \ar @{.} (12,-2)}
\end{xy}
\cdots\ 
\begin{xy}   
{(32,-2) \ar @{{*}-o} (36,2)},   
{(32,-2) \ar @{{*}-o} (36,-6)},   
{(32,-10) \ar @{{*}-} (36,-6)},   
{(32,-10) \ar @{.} (36,-14)},   
{(32,-18) \ar @{{*}-o} (36,-14)},   
%
{(36,2) \ar @{.o} (40,6)},   
{(36,2) \ar @{.o} (40,-2)},   
{(36,-6) \ar @{.} (40,-2)},   
{(36,-6) \ar @{.} (40,-10)},   
{(36,-14) \ar @{.o} (40,-10)}, 
{(32,-1) \ar @/^2mm/ @{-}^{a} (39,6)}, 
{(40,6) \ar @{-{*}} (44,10)},  
{(40,6) \ar @{-{*}} (44,2)},   
{(40,-2) \ar @{-} (44,2)},   
{(40,-2) \ar @{.} (44,-6)},   
{(40,-10) \ar @{-{*}} (44,-6)},
{(40,-10) \ar @{-{*}} (44,-14)},
{(44,10) \ar @{{*}-o} (48,14)},   
{(44,10) \ar @{{*}-o} (48,6)},   
{(44,2) \ar @{{*}-} (48,6)},   
{(44,2) \ar @{.} (48,-2)},   
{(44,-6) \ar @{{*}-o} (48,-2)},   
{(44,-6) \ar @{-} (48,-10)},   
{(44,-14) \ar @{{*}-o} (48,-10)},   
{(48,14) \ar @{.o} (52,18)},   
{(48,14) \ar @{.o} (52,10)},   
{(48,6) \ar @{.} (52,10)},   
{(48,6) \ar @{.} (52,2)},   
{(48,-2) \ar @{.o} (52,2)}, 
{(48,-2) \ar @{.} (52,-6)},   
{(48,-10) \ar @{.o} (52,-6)}, 
{(44,11) \ar @/^2mm/ @{-}^{b} (51,18)}, 
\end{xy}\ 
\cdots
\begin{xy}
{(44,2) \ar @{.} (48,-2)},   
{(44,-6) \ar @{.{*}} (48,-2)},   
{(48,-2) \ar @{-o} (52,2)}, 
{(52,2) \ar @{.o} (56,6)}, 
{(48,-1) \ar @/^2mm/ @{-}^{k_r} (55,6)}, 
\end{xy}\ 
\right),
\end{align*}

 \begin{align*}
\zeta\left(
\ {\footnotesize \ytableausetup{centertableaux, boxsize=1.2em}
	\begin{ytableau}
	\none & \none & \none & \none & \none &  k_1   \\
	\none & b &  a & \none & \none[\iddots] & \none \\
	\none & \none & \none[\ddots] & \none[\ddots] \\
	\none & \none[\iddots] & \none & b & a  \\
	k_r     & \none & \none & \none  
	\end{ytableau}}\
\right)=I\left(\ 
\begin{xy}
{(0,-6) \ar @{{*}-o} (4,-2)}, 
{(4,-2) \ar @{.o} (8,2)}, 
{(0,-5) \ar @/^2mm/ @{-}^{k_1} (7,2)}, 
{(8,2) \ar @{.} (12,6)},  
{(8,2) \ar @{.} (12,-2)}
\end{xy}
\cdots\ 
\begin{xy}   
{(32,2) \ar @{{*}-o} (36,6)},   
{(32,2) \ar @{{*}-o} (36,-2)},   
{(32,-6) \ar @{{*}-} (36,-2)},   
{(32,-6) \ar @{.} (36,-10)},   
{(32,-14) \ar @{{*}-o} (36,-10)},   
%
{(36,6) \ar @{.o} (40,10)},   
{(36,6) \ar @{.o} (40,2)},   
{(36,-2) \ar @{.} (40,2)},   
{(36,-2) \ar @{.} (40,-6)},   
{(36,-10) \ar @{.o} (40,-6)}, 
{(32,3) \ar @/^2mm/ @{-}^{a} (39,10)}, 
{(40,10) \ar @{-{*}} (44,6)},   
{(40,2) \ar @{-} (44,6)},   
{(40,2) \ar @{.} (44,-2)},   
{(40,-6) \ar @{-{*}} (44,-2)},
{(40,-6) \ar @{-{*}} (44,-10)},
{(44,6) \ar @{{*}-o} (48,10)},   
{(44,6) \ar @{.} (48,2)},   
{(44,-2) \ar @{{*}-o} (48,2)},   
{(44,-2) \ar @{-} (48,-6)},   
{(44,-10) \ar @{{*}-o} (48,-6)},   
{(48,10) \ar @{.o} (52,14)},   
{(48,10) \ar @{.} (52,6)},   
{(48,2) \ar @{.o} (52,6)}, 
{(48,2) \ar @{.} (52,-2)},   
{(48,-6) \ar @{.o} (52,-2)}, 
{(44,7) \ar @/^2mm/ @{-}^{b} (51,14)}, 
\end{xy}\ 
\cdots
\begin{xy}
{(44,2) \ar @{.} (48,-2)},   
{(44,-6) \ar @{.{*}} (48,-2)},   
{(48,-2) \ar @{-o} (52,2)}, 
{(52,2) \ar @{.o} (56,6)}, 
{(48,-1) \ar @/^2mm/ @{-}^{k_r} (55,6)}, 
\end{xy}\ 
\right),
\end{align*}

\begin{align*}
\zeta\left(
\ {\footnotesize \ytableausetup{centertableaux, boxsize=1.2em}
	\begin{ytableau}
	\none & \none & \none & \none & \none & k_1   \\
	\none &  a & \none & \none & \none[\iddots] & \none \\
	\none & b & \none[\ddots] \\
	\none & \none & \none[\ddots] &  \none[\ddots] \\
	\none     & \none[\iddots] & \none & b & a \\
	k_r
	\end{ytableau}}\
\right)=I\left(\ 
\begin{xy}
{(0,-6) \ar @{{*}-o} (4,-2)}, 
{(4,-2) \ar @{.o} (8,2)}, 
{(0,-5) \ar @/^2mm/ @{-}^{k_1} (7,2)}, 
{(8,2) \ar @{.} (12,6)},  
{(8,2) \ar @{.} (12,-2)}
\end{xy}
\cdots\ 
\begin{xy}   
{(32,2) \ar @{{*}-o} (36,6)},   
{(32,2) \ar @{{*}-o} (36,-2)},   
{(32,-6) \ar @{{*}-} (36,-2)},   
{(32,-6) \ar @{.} (36,-10)},   
{(32,-14) \ar @{{*}-o} (36,-10)},   
%
{(36,6) \ar @{.o} (40,10)},   
{(36,6) \ar @{.o} (40,2)},   
{(36,-2) \ar @{.} (40,2)},   
{(36,-2) \ar @{.} (40,-6)},   
{(36,-10) \ar @{.o} (40,-6)}, 
{(32,3) \ar @/^2mm/ @{-}^{a} (39,10)}, 
{(40,10) \ar @{-{*}} (44,6)},   
{(40,2) \ar @{-} (44,6)},   
{(40,2) \ar @{.} (44,-2)},   
{(40,-6) \ar @{-{*}} (44,-2)},
{(44,6) \ar @{{*}-o} (48,10)},   
{(44,6) \ar @{.} (48,2)},   
{(44,-2) \ar @{{*}-o} (48,2)},   
{(48,10) \ar @{.o} (52,14)},   
{(48,10) \ar @{.} (52,6)},   
{(48,2) \ar @{.o} (52,6)}, 
{(44,7) \ar @/^2mm/ @{-}^{b} (51,14)}, 
\end{xy}\ 
\cdots
\begin{xy}
{(44,2) \ar @{.} (48,-2)},   
{(44,-6) \ar @{.{*}} (48,-2)},   
{(48,-2) \ar @{-o} (52,2)}, 
{(52,2) \ar @{.o} (56,6)}, 
{(48,-1) \ar @/^2mm/ @{-}^{k_r} (55,6)}, 
\end{xy}\ 
\right).
\end{align*}
\end{rem} 

For better understanding, let us make one note. 
The expression on the right-hand side of Theorem \ref{main1} might be written as
\begin{align*}
I\left(\ 
\begin{xy}
{(0,-6) \ar @{{*}-o} (4,-2)}, 
{(4,-2) \ar @{.o} (8,2)}, 
{(0,-5) \ar @/^2mm/ @{-}^{k_1} (7,2)}, 
{(8,2) \ar @{.} (12,6)},  
{(8,2) \ar @{.} (12,-2)}
\end{xy}
\cdots
\begin{xy}
{(32,4) \ar @{{*}-o} (36,8)},   
{(32,4) \ar @{{*}-o} (36,0)},   
{(32,-4) \ar @{{*}-} (36,0)},   
{(32,-4) \ar @{.} (36,-8)},   
{(32,-12) \ar @{{*}-o} (36,-8)},   
{(32,4) \ar @/_2mm/ @{-}_{l_i} (32,-11)}, 
{(36,8) \ar @{.o} (40,12)},   
{(36,8) \ar @{.o} (40,4)},   
{(36,0) \ar @{.} (40,4)},   
{(36,0) \ar @{.} (40,-4)},   
{(36,-8) \ar @{.o} (40,-4)}, 
{(32,5) \ar @/^2mm/ @{-}^{k_i} (39,12)}, 
\end{xy}\ 
\cdots
\begin{xy}
{(44,2) \ar @{.} (48,-2)},   
{(44,-6) \ar @{.{*}} (48,-2)},   
{(48,-2) \ar @{-o} (52,2)}, 
{(52,2) \ar @{.o} (56,6)}, 
{(48,-1) \ar @/^2mm/ @{-}^{k_r} (55,6)}, 
\end{xy}\ 
\right)
=
I \left(
\rotatebox{45}{
\raisebox{-2ex}{\footnotesize \ytableausetup{centertableaux, boxsize=1.3em}
	\begin{ytableau}
	 \raisebox{0.6ex}{\rotatebox{-45}{$k_1$}} 
	\end{ytableau}}\
}
\cdots\hspace{-6ex}
\rotatebox{-45}{
\raisebox{5.5ex}{\footnotesize \ytableausetup{centertableaux, boxsize=1.3em}
  \begin{ytableau}
    \raisebox{-0.2ex}{\rotatebox{45}{$k_i$}}  & \none & \none & \none \\
   \none & \none[\raisebox{1.2ex}{\rotatebox{-45}{$\cdots$}}]  \\
   \none & \none & \raisebox{-0.2ex}{\rotatebox{45}{$k_i$}}  
 \end{ytableau}}\
}
\hspace{-6ex}\cdots\hspace{-0.5ex}
\rotatebox{45}{
\raisebox{-2ex}{\footnotesize \ytableausetup{centertableaux, boxsize=1.3em}
	\begin{ytableau}
	 \raisebox{0.6ex}{\rotatebox{-45}{$k_r$}} 
	\end{ytableau}}\
}
\right),
\end{align*}
which is obtained by rotating the corresponding tableau index by $3\pi/4$ counterclockwise.

Here we give some examples. 
\begin{ex}\label{example1}
We have
\begin{align*}
\zeta\left(
\ {\footnotesize \ytableausetup{centertableaux, boxsize=1.2em}
	\begin{ytableau}
	 2 & 1 \\
	 1 & 2 
	\end{ytableau}}\
\right)
&=I
\left(
\hspace{-0.9ex}
\rotatebox{45}{
\raisebox{-4ex}{\footnotesize \ytableausetup{centertableaux, boxsize=1.3em}
	\begin{ytableau}
	 \raisebox{0.3ex}{\rotatebox{-45}{1}}  & \raisebox{0.3ex}{\rotatebox{-45}{2}}  \\
	 \raisebox{0.3ex}{\rotatebox{-45}{2}}  & \raisebox{0.3ex}{\rotatebox{-45}{1}}  
	\end{ytableau}}\
}
\right)
=I\left(\ 
\begin{xy}
{(8,-2) \ar @{{*}}},
{(8,-2) \ar @{-} (12,2)},
{(12,2) \ar @{{*}}},
{(8,-2) \ar @{-} (12,-6)},
{(12,-6) \ar @{{*}}},
{(12,2) \ar @{-} (16,6)},
{(16,6) \ar @{o}},
{(12,2) \ar @{-} (16,-2)},  
{(12,-6) \ar @{-} (16,-2)},  
{(16,-2) \ar @{o}},
{(16,6) \ar @{-} (20,2)}, 
{(20,2) \ar @{{*}}},
{(16,-2) \ar @{-} (20,2)}
\end{xy} \ 
\right),\\
\zeta\left(
\ {\footnotesize \ytableausetup{centertableaux, boxsize=1.2em}
	\begin{ytableau}
	 3 &  4 \\
	 2 & 3 
	\end{ytableau}}\
\right)
&=I
\left(
\hspace{-0.9ex}
\rotatebox{45}{
\raisebox{-3.5ex}{\footnotesize \ytableausetup{centertableaux, boxsize=1.3em}
	\begin{ytableau}
	 \raisebox{0.3ex}{\rotatebox{-45}{4}}  & \raisebox{0.3ex}{\rotatebox{-45}{3}}  \\
	 \raisebox{0.3ex}{\rotatebox{-45}{3}}  & \raisebox{0.3ex}{\rotatebox{-45}{2}}  
	\end{ytableau}}\
}
\right)
=I\left(\ 
\begin{xy}
{(0,-12) \ar @{{*}-o} (4,-8)}, 
{(4,-8) \ar @{-o} (8,-4)}, 
{(8,-4) \ar @{-o} (12,0)},
{(12,0) \ar @{-{*}} (16,4)},  
{(12,0) \ar @{-{*}} (16,-4)},  
{(16,4) \ar @{-o} (20,8)}, 
{(16,4) \ar @{-o} (20,0)},
{(16,-4) \ar @{-} (20,0)},  
{(20,8) \ar @{-o} (24,12)},
{(20,8) \ar @{-o} (24,4)}, 
{(20,0) \ar @{-} (24,4)}, 
{(24,12) \ar @{-{*}} (28,8)},
{(24,4) \ar @{-} (28,8)},   
{(28,8) \ar @{-o} (32,12)}
\end{xy}\ 
\right),\\
\end{align*}
\begin{align*}
\zeta\left(
\ {\footnotesize \ytableausetup{centertableaux, boxsize=1.2em}
	\begin{ytableau}
	\none & 2 & 4 \\
	1 & 3 & 2 \\
	2 & 1 & 3 
	\end{ytableau}}\
\right)
&=I
\left(
\hspace{-0.9ex}
\rotatebox{45}{
\raisebox{-5ex}{\footnotesize \ytableausetup{centertableaux, boxsize=1.3em}
	\begin{ytableau}
	 \raisebox{0.3ex}{\rotatebox{-45}{4}}  & \raisebox{0.3ex}{\rotatebox{-45}{2}} & \raisebox{0.3ex}{\rotatebox{-45}{3}}  \\
	 \raisebox{0.3ex}{\rotatebox{-45}{2}}  & \raisebox{0.3ex}{\rotatebox{-45}{3}} & \raisebox{0.3ex}{\rotatebox{-45}{1}}  \\
	 \none  & \raisebox{0.3ex}{\rotatebox{-45}{1}} & \raisebox{0.3ex}{\rotatebox{-45}{2}}  \\
	\end{ytableau}}\
}
\right)
=
I\left(\ 
\begin{xy}
{(0,-16) \ar @{{*}-o} (4,-12)}, 
{(4,-12) \ar @{-o} (8,-8)}, 
{(8,-8) \ar @{-o} (12,-4)},
{(12,-4) \ar @{-{*}} (16,0)},  
{(12,-4) \ar @{-{*}} (16,-8)}, 
{(16,0) \ar @{-o} (20,4)}, 
{(16,0) \ar @{-o} (20,-4)},
{(16,-8) \ar @{-} (20,-4)},  
{(20,4) \ar @{-{*}} (24,8)},
{(20,4) \ar @{-{*}} (24,0)}, 
{(20,-4) \ar @{-} (24,0)},
{(24,8) \ar @{-o} (28,12)},
{(24,8) \ar @{-o} (28,4)},
{(24,0) \ar @{-} (28,4)},   
{(28,12) \ar @{-o} (32,16)},
{(28,12) \ar @{-o} (32,8)},
{(28,4) \ar @{-} (32,8)},   
{(32,16) \ar @{-{*}} (36,12)},
{(32,8) \ar @{-} (36,12)},   
{(32,8) \ar @{-{*}} (36,4)}, 
{(36,12) \ar @{-{*}} (40,8)}, 
{(36,4) \ar @{-} (40,8)}, 
{(40,8) \ar @{-o} (44,12)}, 
\end{xy}\ 
\right),
\end{align*}
and
\begin{align*}
\zeta\left(
\ {\footnotesize \ytableausetup{centertableaux, boxsize=1.2em}
	\begin{ytableau}
	\none & \none & 2 \\
	\none & 2 & 4 \\
	2 & 3 & 2 \\
	1 & 2 & \none 
	\end{ytableau}}\
\right)
&=I
\left(
\hspace{-1.5ex}
\rotatebox{45}{
\raisebox{-6ex}{\footnotesize \ytableausetup{centertableaux, boxsize=1.3em}
	\begin{ytableau}
	 \raisebox{0.3ex}{\rotatebox{-45}{2}}  & \raisebox{0.3ex}{\rotatebox{-45}{4}} & \raisebox{0.3ex}{\rotatebox{-45}{2}} & \none  \\
	 \none & \raisebox{0.3ex}{\rotatebox{-45}{2}}  & \raisebox{0.3ex}{\rotatebox{-45}{3}} & \raisebox{0.3ex}{\rotatebox{-45}{2}}  \\
	 \none & \none & \raisebox{0.3ex}{\rotatebox{-45}{2}} & \raisebox{0.3ex}{\rotatebox{-45}{1}}  \\
	\end{ytableau}}\
}
\right)
=
I\left(\ 
\begin{xy}
{(-8,-20) \ar @{{*}-o} (-4,-16)}, 
{(-4,-16) \ar @{o-{*}} (0,-12)}, 
{(0,-12) \ar @{{*}-o} (4,-8)}, 
{(4,-8) \ar @{-o} (8,-4)}, 
{(8,-4) \ar @{-o} (12,0)},
{(12,0) \ar @{-{*}} (16,4)},  
{(12,0) \ar @{-{*}} (16,-4)}, 
{(16,4) \ar @{-o} (20,8)}, 
{(16,4) \ar @{-o} (20,0)},
{(16,-4) \ar @{-} (20,0)},  
{(20,8) \ar @{-{*}} (24,4)}, 
{(20,0) \ar @{-} (24,4)},
{(24,4) \ar @{-o} (28,8)}, 
{(28,8) \ar @{-o} (32,12)},   
{(32,12) \ar @{-{*}} (36,16)},   
{(32,12) \ar @{-{*}} (36,8)}, 
{(36,16) \ar @{-o} (40,20)},   
{(36,16) \ar @{-o} (40,12)}, 
{(36,8) \ar @{-} (40,12)}, 
{(40,20) \ar @{-{*}} (44,16)}, 
{(40,12) \ar @{-} (44,16)}, 
\end{xy}\ 
\right).
\end{align*}
\end{ex}

\section{Formal description of Theorem \ref{main1}}

\begin{defn}

For a symbol $r\in\{\up,\down,\close,\open\}$, we define $E(r)$ by
\[
E(\up)=E(\down)=0,\ E(\open)=1,\ E(\close)=-1.
\]
Furthermore, for symbols $r_{1},\dots,r_{s}\in\{\up,\down,\close,\open\}$,
we put $E(r_{1}\cdots r_{s})=E(r_{1})+\cdots+E(r_{s})$.
We say that $r_{1}\cdots r_{s}$ is weakly proper if $E(r_{1}\cdots r_{j})\geq0$
for all $1\leq j\leq s$, and $r_{1}\cdots r_{s}$ is proper if it
is weakly proper and $E(r_{1}\cdots r_{s})=0$.

\end{defn}

\begin{defn}

For symbols $r_{1},\dots,r_{s}\in\{\up,\down,\open,\close\}$ such that  $r_{1}\cdots r_{s}$ 
is weakly-proper, we 
define 
\[
L(r_1,\ldots,r_s):=\{(x,y)\in\mathbb{Z}^{2}:1\leq x\leq s+1,y\in Y_{x}\},
\]
where  $Y_{1},\dots,Y_{s+1}$ are subsets of
$\mathbb{Z}$ defined by $Y_{1}=\{0\}$ and
\[
Y_{x+1}=\begin{cases}
Y_{x}^{+} & r_{x}=\up,\\
Y_{x}^{-} & r_{x}=\down,\\
Y_{x}^{+}\cup Y_{x}^{-} & r_{x}=\open,\\
Y_{x}^{+}\cap Y_{x}^{-} & r_{x}=\close
\end{cases}
\]
with $Y_x^{\pm}=\{y\pm1:y\in Y_x\}$.

\end{defn}

\begin{defn}

Let $d_{1},\dots,d_{s+1}\in\{\circ,\bullet\}$ and $r_{1},\dots,r_{s}\in\{\up,\down,\open,\close\}$ such that $r_{1}\cdots r_{s}$ is weakly-proper.
We define a $2$-labeled poset $P(d_{1}r_{1}\cdots d_{s}r_{s}d_{s+1})$ as $((X,\leq),\delta_{X})$ where $X=L(r_1,\ldots,r_s)$,
 $\leq$ is the order\footnote{This order is equal to the one defined by
\[
(x,y)\leq(x',y')\Leftrightarrow y'-y\geq\left|x-x'\right|.
\]} generated by
\[
(x,y)\leq(x+1,y+1)\text{ and }(x,y)\geq(x+1,y-1),
\]
and 
\[
\delta_{X}:X\to\{\circ,\bullet\}
\]
is the map defined by $\delta_{X}((x,y))=d_{x}$.

\end{defn}

\begin{defn}\label{def_ref}

Let  $k_{1},\dots,k_{s+1}$ be positive integers and $r_{1},\dots,r_{s}\in\{\up,\down,\open,\close\}$ symbols such that $r_{1}\cdots r_{s}$ is proper.

\begin{enumerate}

\item We say that $k_{1}r_{1}\cdots k_{s}r_{s}k_{s+1}$ is admissible if and
only if $k_{i}>1$ for all $i$ satisfying
\[
(i=1\lor r_{i-1}\in\{\up,\open\})\land(i=s+1\lor r_{i}\in\{\down,\close\}).
\]
\item When $k_{1}r_{1}\cdots k_{s}r_{s}k_{s+1}$ is admissible, we define $\zeta(k_{1}r_{1}\cdots k_{s}r_{s}k_{s+1})\in\mathbb{R}$ by
\[
\zeta(k_{1}r_{1}\cdots k_{s}r_{s}k_{s+1})=\sum_{\substack{(m_{p})_{p\in L}\in\mathbb{N}^{L}\\
m_{(x,y)}<m_{(x+1,y+1)}\\
m_{(x,y)}\geq m_{(x+1,y-1)}
}
}\prod_{(i,j)\in L}\frac{1}{m_{(i,j)}^{k_{i}}},
\]
where $L=L(r_1,\ldots,r_s)$.

\end{enumerate}

\end{defn}

We denote by $H$ the set of sequences $k_{1}r_{1}\cdots k_{s}r_{s}k_{s+1}$
with $k_{1},\dots,k_{s+1}\in\mathbb{N}$ and $r_{1},\dots,r_{s}\in\{\up,\down,\open,\close\}$
such that $r_{1}\cdots r_{s}$ is proper and $k_{1}r_{1}\cdots k_{s}r_{s}k_{s+1}$
is admissible.
Note that $\zeta(k_{1}r_{1}\cdots k_{s}r_{s}k_{s+1})$ in Definition
\ref{def_ref} is just a reformulation of the Schur MZVs for $T^{\mathrm{diag}}(\lambda)$.
For example, we have
\[
\zeta\left(
\ {\footnotesize \ytableausetup{centertableaux, boxsize=1.2em}
	\begin{ytableau}
	\none & \none & 2 \\
	\none & 2 & 4 \\
	2 & 3 & 2 \\
	1 & 2 & \none 
	\end{ytableau}}\
\right)=\zeta(2\up4\open2\close3\open2\close1).
\]

Now, Theorem \ref{main1} can be restated as follows:
\begin{thm}\label{main2}

For $k_{1}r_{1}\cdots k_{s}r_{s}k_{s+1}\in H$, we have
\[
\zeta(k_{1}r_{1}\cdots k_{s}r_{s}k_{s+1})=I( P(\bullet\{\up\circ\}^{k_{1}-1}r_{1}\cdots\bullet\{\up\circ\}^{k_{s}-1}r_{s}\bullet\{\up\circ\}^{k_{s+1}-1})),
\]
where $\{\up\circ\}^l$ means $l$-times repetition of $\up\circ$, e.g., $P(\bullet\{\up\circ\}^{2})=P(\bullet\up\circ\up\circ)$.

\end{thm}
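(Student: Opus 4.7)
My strategy is to evaluate the iterated integral on the right-hand side by the classical geometric-series expansion, and then to match the resulting multi-sum term-by-term with the summation of Definition~\ref{def_ref}. Concretely, I would expand each factor $\omega_\bullet(t) = \frac{dt}{1-t}$ in the integrand as $\sum_{n \geq 1} t^{n-1}\, dt$, introducing one fresh summation index at every vertex of the expanded poset labeled by $\bullet$. The remaining $\omega_\circ$-integrations are then carried out in an order compatible with the poset, using $\int_0^{t'} u^{n-1}\,du = (t')^n/n$ to produce a factor of $1/n$ at each circle vertex.

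The structural observation is that the expanded poset $P(\bullet\{\up\circ\}^{k_1-1} r_1 \cdots r_s \bullet\{\up\circ\}^{k_{s+1}-1})$ decomposes into $s+1$ ``chain blocks,'' the $i$-th block consisting of $|Y_i|$ parallel chains of length $k_i$, one attached above each element of the $i$-th column of $L(r_1, \ldots, r_s)$. Each individual chain has one $\bullet$ at its bottom and $k_i - 1$ $\circ$'s above; after expansion and integration, every such chain contributes a single summation index $m_p$ (indexed by the corresponding point $p \in L$) together with a factor $1/m_p^{k_i}$ in the total sum. Adjacent chain blocks are connected by the transition $r_i$: the inequality between the top of one chain and the bottom of the next, which the iterated integral enforces through its order of integration, translates under the series expansion into a strict inequality $m_p < m_q$ along upward edges of $L$ and a non-strict inequality $m_q \leq m_p$ along downward edges---precisely the two conditions of Definition~\ref{def_ref}.

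\textbf{Main obstacle.} The principal difficulty lies in handling the branching/merging transitions $r_i \in \{\open, \close\}$, which cannot be dispatched by a pure chain integration. For $\open$, where $Y_{x+1} = Y_x^+ \cup Y_x^-$, a single lower chain is shared as a common lower bound by two upper chains, so the summation index at its top must simultaneously satisfy the inequality arriving from each branch; verifying that the geometric-series expansion consistently produces the correct strict inequality with each independent branch is the technical core. For $\close$, where $Y_{x+1} = Y_x^+ \cap Y_x^-$, two lower chains converge to a common upper chain, and one must check that the integration produces the correct joint upper-bound constraint on the pair of lower summation indices. I expect the cleanest resolution to be an induction on $s$ with a strengthened hypothesis that keeps the ``top-of-column'' variables as free parameters, so that each of the four transitions $\up, \down, \open, \close$ becomes a single verifiable inductive step on a generating-function-valued identity, with the base case $s = 0$ reducing to the classical Kontsevich integral expression $\zeta(k_1) = I(P(\bullet\{\up\circ\}^{k_1-1}))$.
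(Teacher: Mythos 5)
Your overall strategy is the same as the paper's: strengthen the statement by keeping the ``top-of-column'' endpoints as free parameters and induct, with base case the classical Kontsevich expression. This is exactly what the paper's Theorem~\ref{main3} does (the parameters $t_0,\dots,t_{e+1}$ are the endpoints of the intervals $U_{(s+1,y_j)}=[t_j,t_{j+1}]$ attached to the last column), and Theorem~\ref{main2} is then the specialization to $(t_0,t_1)=(0,1)$. So the skeleton is right.

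However, there is a genuine gap: you correctly isolate the $\open$/$\close$ transitions as the ``technical core,'' but you do not resolve them, and the resolution is essentially the whole content of the proof. The difficulty is that at a merge ($\close$) the domain of integration contains a constraint of the form $\max(t_a,t_b)<t_c$ coming from two incomparable lower elements sharing an upper cover, so the poset integral is not an iterated one-dimensional integral in any single order, and the naive scheme ``one independent power series per chain, integrated bottom-to-top'' does not close under the recursion. The missing idea is the precise form of the strengthened inductive hypothesis: the quantity attached to the last column must be packaged as the alternating sum (determinant)
\[
F\binom{t_{1},\dots,t_{j}}{m_{1},\dots,m_{j-1}}=\begin{vmatrix}
1 & \dots & 1\\
t_{1}^{m_{1}} & \dots & t_{j}^{m_{1}}\\
 & \dots\\
t_{1}^{m_{j-1}} & \dots & t_{j}^{m_{j-1}}
\end{vmatrix},
\]
not as a product of independent partial sums in each $t_i$. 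The inductive step then rests on the identities of the paper's Lemma~\ref{lem1}: integrating such a determinant over the interlacing domain $u_0<t_1<u_1<\cdots<t_j<u_j$ against $\prod dt_i/t_i$ multiplies by $1/(n_1\cdots n_j)$ and replaces the $t$'s by the $u$'s, while integrating against $\prod dt_i/(1-t_i)$ produces the interlaced sum $n_1<m_1\le n_2<\cdots$, which is exactly what encodes the mixed strict/non-strict inequalities of Definition~\ref{def_ref} across a column transition. (The paper also inducts on the total weight $k_1+\cdots+k_{s+1}$ rather than on $s$, peeling off one symbol at a time via the recursions \eqref{relation1} and \eqref{relation2}, but that is a minor organizational difference.) Until you identify this determinant structure and prove the corresponding integral identities, the claim that each of the four transitions ``becomes a single verifiable inductive step'' remains an expectation rather than a proof.
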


For example, we have
\[
\zeta(2\up4\open2\close3\open2\close1)=I\left(P\left(\bullet\up\circ\up\bullet\up\circ\up\circ\up\circ\open\bullet\up\circ\close\bullet\up\circ\up\circ\open\bullet\up\circ\close\bullet\right)\right)
\]
(see the last identity of Example \ref{example1}).

\section{Generalization}


\begin{defn}

For symbols $d_{1},\dots,d_{s+1}\in\{\circ,\bullet\}$, $r_{1},\dots,r_{s}\in\{\up,\down,\open,\close\}$ such that $r_{1}\cdots r_{s}$ is weakly-proper, and real numbers $0\le t_1<\cdots<t_m\le 1$ where $m=E(r_1\cdots r_s)+2$, we shall
define  $J(d_{1}r_{1}\cdots d_{s}r_{s}d_{s+1};t_1,\ldots, t_m)$ by
\[
I(P(d_{1}r_{1}\cdots d_{s}r_{s}d_{s+1}),U),
\]
where $U=(U_{p})_{p\in L}\in\mathcal{U}^{L}$ with $L=L(r_{1},\dots,r_{s})$
is defined by
\[
U_{(x,y)}=[0,1]
\]
 for $1\leq x\leq s$ and 
\[
U_{(s+1,y_j)}=[t_{j},t_{j+1}]
\]
for the $j$-th smallest number $y_j$ of $\{y\in\mathbb{Z}:(s+1,y)\in L\}$.

\end{defn}


Note that
$I( P(\bullet\{\up\circ\}^{k_{1}-1}r_{1}\cdots\bullet\{\up\circ\}^{k_{s}-1}r_{s}\bullet\{\up\circ\}^{k_{s+1}-1}))$ in Theorem \ref{main2} is equal to $J(\bullet\{\up\circ\}^{k_{1}-1}r_{1}\cdots\bullet\{\up\circ\}^{k_{s}-1}r_{s}\bullet\{\up\circ\}^{k_{s+1}-1};0, 1)$.
Furthermore, $J$ satisfies the following recursive relations

\begin{align}\label{relation1}
\begin{split}
&J(d_{1}r_{1}\cdots d_{s}r_{s}d_{s+1}\up e;u_{1},\dots,u_{m}) \\
& \quad=\int_{u_{1}\leq t_{1}\leq\cdots \leq u_{m-1}\leq t_{m-1}\leq u_{m}}\omega_{e}(t_{1})\cdots\omega_{e}(t_{m-1})J(d_{1}r_{1}\cdots d_{s}r_{s}d_{s+1};0,t_{1},\dots,t_{m-1}),\\
&J(d_{1}r_{1}\cdots d_{s}r_{s}d_{s+1}\down e;u_{1},\dots,u_{m}) \\
& \quad=\int_{u_{1}\leq t_{1}\leq\cdots \leq u_{m-1}\leq t_{m-1}\leq u_{m}}\omega_{e}(t_{1})\cdots\omega_{e}(t_{m-1})J(d_{1}r_{1}\cdots d_{s}r_{s}d_{s+1};t_{1},\dots,t_{m-1},1),\\
&J(d_{1}r_{1}\cdots d_{s}r_{s}d_{s+1}\open e;u_{1},\dots,u_{m}) \\
& \quad=\int_{u_{1}\leq t_{1}\leq\cdots \leq u_{m-1}\leq t_{m-1}\leq u_{m}}\omega_{e}(t_{1})\cdots\omega_{e}(t_{m-1})J(d_{1}r_{1}\cdots d_{s}r_{s}d_{s+1};t_{1},\dots,t_{m-1}),\\
&J(d_{1}r_{1}\cdots d_{s}r_{s}d_{s+1}\close e;u_{1},\dots,u_{m}) \\
& \quad=\int_{u_{1}\leq t_{1}\leq\cdots \leq u_{m-1}\leq t_{m-1}\leq u_{m}}\omega_{e}(t_{1})\cdots\omega_{e}(t_{m-1})J(d_{1}r_{1}\cdots d_{s}r_{s}d_{s+1};0,t_{1},\dots,t_{m-1},1)
\end{split}
\end{align}
for $e\in\{\circ,\bullet\}$.
Put
\[
F\binom{t_{0},\dots,t_{j}}{n_{1},\dots,n_{j}}\coloneqq\begin{vmatrix}
1 & \dots & 1\\
t_{0}^{n_{1}} & \dots & t_{j}^{n_{1}}\\
 & \dots\\
t_{0}^{n_{j}} & \dots & t_{j}^{n_{j}}
\end{vmatrix}.
\]

\begin{defn}

Let  $k_{1},\dots,k_{s+1}$ be positive integers and $r_{1},\dots,r_{s}\in\{\up,\down,\open,\close\}$ symbols such that $r_{1}\cdots r_{s}$ is weakly-proper.
Put $j\coloneqq E(r_1\cdots r_s)+2$. Let $t_{1},\dots,t_{j}$
be real numbers such that $0\le t_{1}<\cdots<t_{j}\le1$.
\begin{enumerate}
\item We say that $(k_{1}r_{1}\cdots k_{s}r_{s}k_{s+1};t_1,\ldots,t_j)$ is admissible if and
only if $k_{i}>1$ for all $i$ satisfying
\[
(i=1\lor r_{i-1}\in\{\up,\open\})\land((i=s+1\land t_j=1)\lor r_{i}\in\{\down,\close\}).
\]
\item When  $(k_{1}r_{1}\cdots k_{s}r_{s}k_{s+1};t_1,\ldots,t_j)$ is admissible, we define $\zeta(k_{1}r_{1}\cdots k_{s}r_{s}k_{s+1};t_1,\ldots,t_j)\in\mathbb{R}$ by
\begin{align*}
&\zeta(k_{1}r_{1}\cdots k_{s}r_{s}k_{s+1};t_1,\ldots,t_j)\\
&\quad=\sum_{\substack{(m_{p})_{p\in L}\in\mathbb{N}^{L}\\
m_{(x,y)}<m_{(x+1,y+1)}\\
m_{(x,y)}\geq m_{(x+1,y-1)}
}
}F\binom{t_{1},\dots,t_{j}}{m_{(s+1,y_{1})},\dots,m_{(s+1,y_{j-1})}}\prod_{(x,y)\in L}\frac{1}{m_{(x,y)}^{k_{x}}},
\end{align*}
where $L=L(r_1,\ldots,r_s)$ and $y_i$ is the $i$-th smallest number of $\{y\in\mathbb{Z}:(s+1,y)\in L\}$.
\end{enumerate}

\end{defn}

Note that
$\zeta(k_{1}r_{1}\cdots k_{s}r_{s}k_{s+1})$ in Theorem \ref{main2} is equal to $\zeta(k_{1}r_{1}\cdots k_{s}r_{s}k_{s+1}; 0,1)$.
We denote by $H^{weak}$ the set of sequences $(k_{1}r_{1}\cdots k_{s}r_{s}k_{s+1};t_1,\ldots,t_{j})$
with $k_{1},\dots,k_{s+1}\in\mathbb{N}$ and $r_{1},\dots,r_{s}\in\{\up,\down,\open,\close\}$
such that $r_{1}\cdots r_{s}$ is weakly proper and  $(k_{1}r_{1}\cdots k_{s}r_{s}k_{s+1};t_1,\ldots,t_{j})$
is admissible.
Then the following is a generalization of Theorem \ref{main1}.

\begin{thm}\label{main3}

For $(k_{1}r_{1}\cdots k_{s}r_{s}k_{s+1};t_0,\ldots,t_{e+1})\in H^{weak}$, we have
\begin{align*}
&\zeta(k_{1}r_{1}\cdots k_{s}r_{s}k_{s+1};t_0,\ldots,t_{e+1})\\
&=J(\bullet\{\up\circ\}^{k_{1}-1}r_{1}\cdots\bullet\{\up\circ\}^{k_{s}-1}r_{s}\bullet\{\up\circ\}^{k_{s+1}-1};t_0,\ldots,t_{e+1}),
\end{align*}
where $e=E(r_1\cdots r_s)$.
\end{thm}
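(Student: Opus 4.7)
We prove Theorem \ref{main3} by induction on the total weight $N=k_1+\cdots+k_{s+1}$, using the recursive relations \eqref{relation1} to peel one letter-pair from the right end of the word $W=\bullet\{\up\circ\}^{k_1-1}r_1\cdots r_s\bullet\{\up\circ\}^{k_{s+1}-1}$ on the $J$-side, and establishing a matching peeling identity on the $\zeta$-side. The base case $s=0,k_1=1$ is the elementary $\int_{t_0}^{t_1}dt/(1-t)=\sum_{m\geq 1}(t_1^m-t_0^m)/m$.

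For the inductive step, we distinguish two cases. When $k_{s+1}\geq 2$, the last pair of letters of $W$ is $\up\circ$, and \eqref{relation1} writes $J(W;u_0,\ldots,u_{e+1})$ as an integral (over the simplex $\Delta=\{u_0\leq t_0\leq u_1\leq\cdots\leq t_e\leq u_{e+1}\}$) of $J(W^{\sharp};0,t_0,\ldots,t_e)$, where $W^{\sharp}$ is obtained by decreasing $k_{s+1}$ by $1$. We establish the matching identity
\[
\zeta(\cdots k_{s+1};u_0,\ldots,u_{e+1})=\int_\Delta\prod_{i=0}^{e}\frac{dt_i}{t_i}\,\zeta(\cdots(k_{s+1}-1);0,t_0,\ldots,t_e),
\]
which reduces termwise to the determinantal identity
\[
\frac{F\binom{u_0,\ldots,u_{e+1}}{M_0,\ldots,M_e}}{\prod_i M_i}=\int_\Delta\prod_{i=0}^{e}\frac{dt_i}{t_i}\,F\binom{0,t_0,\ldots,t_e}{M_0,\ldots,M_e}.
\]
This in turn follows from the column-difference expansion $F\binom{\vec u}{\vec M}=\det(u_{j+1}^{M_i}-u_j^{M_i})$, the elementary $\int_{u_j}^{u_{j+1}}t^{M_i-1}\,dt=(u_{j+1}^{M_i}-u_j^{M_i})/M_i$, the identity $F\binom{0,\vec t}{\vec M}=\det(t_j^{M_i})$ (via expansion along the first column), and the standard commutativity of $\det$ with a box integral.

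When $k_{s+1}=1$, the last pair is $r_s\bullet$, and \eqref{relation1} gives $J(W;\vec u)=\int\prod dt_i/(1-t_i)\cdot J(W^{\flat};\vec t^{(r_s)})$, where $W^{\flat}$ is $W$ with the final $r_s\bullet$ removed and $\vec t^{(r_s)}$ is $(0,\vec t)$, $(\vec t,1)$, $(\vec t)$, or $(0,\vec t,1)$ for $r_s=\up,\down,\open,\close$ respectively. On the $\zeta$-side we sum out the column-$(s+1)$ variables $M_i$; the crucial observation is that, given the column-$s$ variables $n_i$, each $M_i$ lies in a mutually independent integer range---namely $(n_i,\infty)$, $[1,n_i]$, the endpoint-dependent chain $\{[1,n_0],(n_{i-1},n_i],(n_{e-1},\infty)\}$, or $(n_i,n_{i+1}]$ for the four cases---so that $\sum_M\det(t_j^{M_i-1})$ factors as $\det(S_i(t_j))$ with $S_i(t)=a_i(t)/(1-t)$ and $a_i(t)$ an explicit polynomial (respectively $t^{n_i}$, $1-t^{n_i}$, a telescoping piece $t^{n_{i-1}}-t^{n_i}$, or $t^{n_i}-t^{n_{i+1}}$, with appropriate endpoint modifications). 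Row manipulations---using telescoping identities such as $\sum_i a_i=1$ for $\open$---then identify $\det(a_i(t_j))$ with $F\binom{\vec t^{(r_s)}}{\vec n}$, establishing the matching $\zeta$-recursion.

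The main obstacle lies in the case-by-case determinantal matching in the $k_{s+1}=1$ case, particularly for $r_s\in\{\open,\close\}$, where the interlacing pattern between the $M_i$'s and $n_i$'s requires careful bookkeeping of ranges, signs, and endpoint insertions (of $0$ and/or $1$) in $\vec t^{(r_s)}$. Once the matching is verified in all four cases, applying the inductive hypothesis to the strictly-smaller-weight inner $\zeta$ and $J$ completes the induction.
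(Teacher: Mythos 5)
Your proposal is correct and follows essentially the same route as the paper: induction on the total weight, peeling the last letter pair of the word via \eqref{relation1}, and matching it with a determinantal recursion on the $\zeta$-side --- which is precisely the content of the paper's relations \eqref{relation2} and Lemma \ref{lem2}, your ``sum the geometric series, then row-reduce'' computation being the same manipulation as the paper's term-by-term integration and row reduction in Lemma \ref{lem1}. One small caveat: for $r_s\in\{\up,\down\}$ the last-column variables actually range over the interlacing chains $(n_i,n_{i+1}]$ (resp.\ $(n_{i-1},n_i]$) with a single unbounded (resp.\ bottom) link, not the uniform ranges $(n_i,\infty)$ (resp.\ $[1,n_i]$) you state, although the row operations you already invoke make the two resulting determinants coincide, so the method is unaffected.
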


\section{Proof of the Main Theorem}


\subsection{Proof of the main theorem}
\begin{lem}\label{lem1}
\begin{itemize}
\item[(\,I\,)]For $0\le u_0<u_1<\cdots<u_j \le1$ and $n_1,\ldots,n_j>0$, we have
\begin{align*} 
 \begin{split}
 &\int_{ u_0<t_{1}<u_{1}<\cdots<t_{j}<u_{j}}
 \begin{vmatrix}
  t_1^{n_1} & \cdots & t_j^{n_1} \\
  & \cdots & \\
  t_1^{n_j} & \cdots & t_j^{n_j} \\
 \end{vmatrix} 
 \frac{dt_{1}}{t_{1}} \cdots \frac{dt_{j}}{t_{j}}
 =\frac{1}{n_1\cdots n_j}
 \begin{vmatrix}
  1 & \cdots &1 \\
  u_0^{n_1} & \cdots & u_j^{n_1} \\
  & \cdots & \\
  u_0^{n_j} & \cdots & u_j^{n_j} 
 \end{vmatrix}.
 \end{split}
\end{align*}\\
\item[(I\hspace{-.01em}I)]For $0\le u_0<\cdots<u_j<1$ and $0\le n_1<\cdots<n_j$, we have
\begin{align*} 
 \begin{split}
  &\int_{ u_{0}<t_{1}<u_{1}<\cdots<t_{j}<u_{j}}
  \begin{vmatrix}
   t_1^{n_1} & \cdots & t_j^{n_1} \\
   & \cdots & \\
   t_1^{n_j} & \cdots & t_j^{n_j} 
  \end{vmatrix}
  \frac{dt_{1}}{1-t_{1}} \cdots \frac{dt_{j}}{1-t_{j}}\\
  &=\sum_{n_1<m_1\le n_2<\cdots\le n_j<m_j}
  \frac{1}{m_1\cdots m_j}
  \begin{vmatrix}
   1 & \cdots & 1\\
   u_0^{m_1} & \cdots & u_j^{m_1} \\
   &\cdots & \\
   u_0^{m_j} & \cdots & u_j^{m_j} 
  \end{vmatrix}.
 \end{split}
\end{align*}\\
\item[(I\hspace{-.15em}I\hspace{-.15em}I)]For $0\le u_0<\cdots<u_j <1$ and $0\le n_1<\cdots<n_{j+1}$, we have
\begin{align*}
 \begin{split}
  &\int_{u_{0}<t_{1}<u_{1}<\cdots<t_{j}<u_{j}}
  \begin{vmatrix}
   t_1^{n_1} & \cdots & t_{j}^{n_1} & 1\\
   & \cdots & \\
   t_1^{n_{j+1}} & \cdots & t_{j}^{n_{j+1}} & 1
  \end{vmatrix}
 \frac{dt_{1}}{1-t_{1}} \cdots \frac{dt_{j}}{1-t_{j}}\\
  &=\sum_{n_1<m_1\le n_2<\cdots<m_{j} \le n_{j+1}}
  \frac{1}{m_1\cdots m_{j}}
  \begin{vmatrix}
   1 & \cdots & 1\\
   u_0^{m_1} & \cdots & u_{j}^{m_1} \\
   &\cdots & \\
   u_0^{m_{j}} & \cdots & u_{j}^{m_{j}} 
  \end{vmatrix}.
 \end{split}
\end{align*}
\end{itemize}
\end{lem}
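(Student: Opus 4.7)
The plan is a uniform strategy for all three identities, based on the fact that the integration region $u_0<t_1<u_1<\cdots<t_j<u_j$ decouples as a product of disjoint intervals and that the variable $t_i$ appears only in the $i$-th column of each integrand determinant. Multilinearity of the determinant in its columns therefore lets us swap integral and determinant,
\[
\int \det\bigl(f_{k,i}(t_i)\bigr)_{k,i=1}^{j}\prod_i dt_i \;=\; \det\Bigl(\int_{u_{i-1}}^{u_i} f_{k,i}(t_i)\,dt_i\Bigr)_{k,i=1}^{j},
\]
reducing each lemma to the evaluation of a determinant of single-variable integrals. For (I) these entries are $(u_i^{n_k}-u_{i-1}^{n_k})/n_k$; after pulling $1/(n_1\cdots n_j)$ out of the rows, the resulting $j\times j$ determinant equals the stated $(j+1)\times(j+1)$ determinant via the column operations $C_i\to C_i-C_{i-1}$ on the latter (which kill the first row except for the top-left entry $1$, after which cofactor expansion along that row gives exactly $\det(u_i^{n_k}-u_{i-1}^{n_k})_{k,i=1}^{j}$). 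I will reuse this column-operation identity to close both (II) and (III).

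For (II) the single-variable integrals are $\int_{u_{i-1}}^{u_i} t^{n_k}\,\frac{dt}{1-t}=\sum_{m>n_k}(u_i^m-u_{i-1}^m)/m$, giving the determinant of $A_{k,i}\coloneqq\sum_{m>n_k}(u_i^m-u_{i-1}^m)/m$. The crucial maneuver is a row-telescoping that uses $n_1<\cdots<n_j$: the row operations $R_k\to R_k-R_{k+1}$ for $k=1,\ldots,j-1$ collapse the $k$-th row entries to the finite-range sums $\sum_{n_k<m\le n_{k+1}}(u_i^m-u_{i-1}^m)/m$, while the last row $R_j$ still ranges over $m_j>n_j$. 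Multilinearity in rows pulls these sums out, and the product of ranges $(n_1,n_2]\times\cdots\times(n_{j-1},n_j]\times(n_j,\infty)$ reproduces exactly the nested condition in the statement; the column-operation identity from (I) then converts $\det(u_i^{m_k}-u_{i-1}^{m_k})$ into the big $(j+1)\times(j+1)$ determinant on the right-hand side.

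For (III) the $(j+1)\times(j+1)$ integrand already carries an all-ones last column, which allows one extra row operation. After the column-wise integration, apply $R_k\to R_k-R_{k+1}$ for every $k=1,\ldots,j$: the last column zeroes out in rows $1,\ldots,j$, so cofactor expansion along it collapses the determinant to a $j\times j$ minor whose entries are the telescoped sums $\sum_{n_k<m\le n_{k+1}}(u_i^m-u_{i-1}^m)/m$. Multilinearity in rows then gives the nested sum over $m_k\in(n_k,n_{k+1}]$ for $k=1,\ldots,j$ (so that $m_1<\cdots<m_j$ and $m_j\le n_{j+1}$ automatically), and the column-operation identity closes the proof. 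The main obstacle throughout is bookkeeping --- tracking signs in the row operations and verifying the telescoping $A_{k,i}-A_{k+1,i}=\sum_{n_k<m\le n_{k+1}}(\cdots)$ --- but nothing deeper than multilinearity of $\det$ and the series expansion of $1/(1-t)$ is required, and absolute convergence on $u_j<1$ justifies all interchanges of summation, integration, and determinant expansion.
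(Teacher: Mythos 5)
Your proof is correct and follows essentially the same route as the paper's: integrate column-by-column using multilinearity of the determinant, expand $1/(1-t)$ as a geometric series, telescope via the row operations $R_k\to R_k-R_{k+1}$, and convert the resulting difference determinant $\det\bigl(u_i^{m_k}-u_{i-1}^{m_k}\bigr)$ into the bordered $(j+1)\times(j+1)$ determinant by column operations. The only cosmetic difference is that you prove (II) directly by the same telescoping, whereas the paper deduces (II) from (III) by letting $n_{j+1}\to\infty$; both are equally valid.
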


\begin{proof}
For (\,I\,), we have
\begin{align*} 
\textrm{(L.H.S.)} &=\frac{1}{n_1\cdots n_j}
 \begin{vmatrix}
  u_1^{n_1}-u_0^{n_1} & \cdots & u_j^{n_1}-u_{j-1}^{n_1} \\
  & \cdots & \\
  u_1^{n_j}-u_0^{n_j} & \cdots & u_j^{n_j}-u_{j-1}^{n_j} \\
 \end{vmatrix}\\
 &=\textrm{(R.H.S.)}.
\end{align*}

For (I\hspace{-.15em}I\hspace{-.15em}I), we have

\begin{align*}
\notag
&\textrm{(L.H.S.)} \\
\notag&=
\begin{vmatrix}
\sum_{m_1>n_1} \frac{1}{m_1}(u_1^{m_1}-u_0^{m_1}) & \cdots & \sum_{m_1>n_1} \frac{1}{m_1}(u_{j}^{m_1}-u_{j-1}^{m_1}) & 1 \\
&\cdots & \\
\sum_{m_{j+1}>n_{j+1}} \frac{1}{m_{j+1}}(u_1^{m_{j+1}}-u_0^{m_{j+1}}) & \cdots &\sum_{m_{j+1}>n_{j+1}}  \frac{1}{m_{j+1}}(u_{j}^{m_{j+1}}-u_{j-1}^{m_{j+1}}) & 1
\end{vmatrix}\\
\notag&=
\begin{vmatrix}
1 & \cdots & 1 & 0\\
\sum_{m_1> n_1} \frac{1}{m_1}u_0^{m_1} & \cdots & \sum_{m_1>n_1} \frac{1}{m_1}u_{j}^{m_1} & 1 \\
&\cdots & \\
\sum_{m_{j+1}>n_{j+1}} \frac{1}{m_{j+1}}u_0^{m_{j+1}} & \cdots & \sum_{m_{j+1}>n_{j+1}} \frac{1}{m_{j+1}}u_{j}^{m_{j+1}} & 1 
\end{vmatrix} \\
\notag&=
\begin{vmatrix}
1 & \cdots & 1 \\
\sum_{m_{1}>n_{1}} \frac{1}{m_{1}}u_0^{m_1}-\sum_{m_2>n_2} \frac{1}{m_2}u_0^{m_2} & \cdots & \sum_{m_{1}>n_{1}} \frac{1}{m_{1}}u_{j}^{m_{1}} -\sum_{m_2>n_2} \frac{1}{m_2}u_{j}^{m_2}\\
&\cdots & \\
\sum_{m_{j}>n_{j}} \frac{1}{m_{j}}u_0^{m_{j}} -\sum_{m_{j+1}>n_{j+1}} \frac{1}{m_{j+1}}u_0^{m_{j+1}}& \cdots & \sum_{m_{j}>n_{j}} \frac{1}{m_{j}}u_{j}^{m_{j}} 
-\sum_{m_{j+1}>n_{j+1}} \frac{1}{m_{j+1}}u_{j}^{m_{j+1}}
\end{vmatrix} \\
\notag&=
\begin{vmatrix}
1 & \cdots & 1 \\
\sum_{n_2\ge m_1>n_1} \frac{1}{m_1}u_0^{m_1} & \cdots & \sum_{n_2\ge m_1>n_1} \frac{1}{m_1}u_{j}^{m_1} \\
&\cdots & \\
\sum_{n_{j+1}\ge m_{j}>n_{j}} \frac{1}{m_{j}}u_0^{m_{j}} & \cdots & \sum_{n_{j+1}\ge m_{j}>n_{j}} \frac{1}{m_{j}}u_{j}^{m_{j}}  \\
\end{vmatrix} \\
&=\textrm{(R.H.S.)} .
\end{align*}

Finally, (I\hspace{-.01em}I) is obtained by taking the limit $n_{j+1}\to\infty$ in (I\hspace{-.15em}I\hspace{-.15em}I).
\end{proof}


Put
\[
F\binom{t_{0},\dots,t_{j}}{n_{1},\dots,n_{j}}\coloneqq\begin{vmatrix}
1 & \dots & 1\\
t_{0}^{n_{1}} & \dots & t_{j}^{n_{1}}\\
 & \dots\\
t_{0}^{n_{j}} & \dots & t_{j}^{n_{j}}
\end{vmatrix}
\]
and
\[
X(n_{1},\dots,n_{d+1})\coloneqq\{(m_{1},\dots,m_{d})\in\mathbb{Z}^{d}:n_{1}<m_{1}\leq n_{2}<m_{2}\leq\cdots\leq n_{d}<m_{d}\leq n_{d+1}\}.
\]
\begin{lem} \label{lem2}
Let $0\le n_{1}<\cdots<n_{j}$ be integers and $0\leq u_{0}\leq\cdots\leq u_{j}\leq1$
be real numbers with $j\geq1$. Then, we have the following identities if both sides are defined:
\begin{align*}
\int_{D}F\binom{0,t_{1},\dots,t_{j}}{n_{1},\dots,n_{j}}\frac{dt_{1}}{t_{1}}\cdots\frac{dt_{j}}{t_{j}}&=\frac{1}{n_{1}\dots n_{j}}F\binom{u_{0},\dots,u_{j}}{n_{1},\dots,n_{j}},\\
\int_{D}F\binom{t_{1},\dots,t_{j}}{n_{1},\dots,n_{j-1}}\frac{dt_{1}}{1-t_{1}}\cdots\frac{dt_{j}}{1-t_{j}}&=\sum_{(m_{1},\dots,m_{j})\in X(0,n_{1},\dots n_{j-1},\infty)}\frac{1}{m_{1}\cdots m_{j}}F\binom{u_{0},\dots,u_{j}}{m_{1},\dots,m_{j}},\\
\int_{D}F\binom{0,t_{1},\dots,t_{j}}{n_{1},\dots,n_{j}}\frac{dt_{1}}{1-t_{1}}\cdots\frac{dt_{j}}{1-t_{j}}&=\sum_{(m_{1},\dots,m_{j})\in X(n_{1},\dots n_{j},\infty)}\frac{1}{m_{1}\cdots m_{j}}F\binom{u_{0},\dots,u_{j}}{m_{1},\dots,m_{j}},\\
\int_{D}F\binom{t_{1},\dots,t_{j},1}{n_{1},\dots,n_{j}}\frac{dt_{1}}{1-t_{1}}\cdots\frac{dt_{j}}{1-t_{j}}&=\sum_{(m_{1},\dots,m_{j})\in X(0,n_{1},\dots n_{j})}\frac{1}{m_{1}\cdots m_{j}}F\binom{u_{0},\dots,u_{j}}{m_{1},\dots,m_{j}},\\
\int_{D}F\binom{0,t_{1},\dots,t_{j},1}{n_{1},\dots,n_{j+1}}\frac{dt_{1}}{1-t_{1}}\cdots\frac{dt_{j}}{1-t_{j}}&=\sum_{(m_{1},\dots,m_{j})\in X(n_{1},\dots n_{j+1})}\frac{1}{m_{1}\cdots m_{j}}F\binom{u_{0},\dots,u_{j}}{m_{1},\dots,m_{j}},
\end{align*}
where the integral domain $D=D(u_{0},\dots,u_{j})$ is given by
\[
\{(t_{1},\dots,t_{j}):u_{0}<t_{1}<u_{1}<\cdots<t_{j}<u_{j}\}.
\]
\end{lem}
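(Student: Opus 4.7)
The plan is to derive all five identities of Lemma~\ref{lem2} directly from Lemma~\ref{lem1} by elementary manipulations of the determinant $F$, without requiring any further integration.

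For the first, third, and fifth identities, the integrand has the form $F\binom{0, t_1, \ldots}{n_1, \ldots}$, whose first column equals $(1, 0^{n_1}, \ldots, 0^{n_j})^{\top} = (1, 0, \ldots, 0)^{\top}$ whenever $n_1 \geq 1$, a condition implicit in the clause ``if both sides are defined''. Cofactor expansion along this first column identifies the integrand with the one on the left-hand side of Lemma~\ref{lem1}~(I), (II), (III), respectively, whence the three identities follow immediately.

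For the second and fourth identities, I specialize Lemma~\ref{lem1}~(II) and (III) by setting the first exponent $n_1 = 0$ and re-indexing $n_k \mapsto n_{k-1}$. With $n_1 = 0$, the top row $(t_i^{n_1})_i$ of the integrand in Lemma~\ref{lem1}~(II) becomes $(1, \ldots, 1)$, so the $j \times j$ determinant equals $F\binom{t_1, \ldots, t_j}{n_2, \ldots, n_j}$; analogously, the integrand in Lemma~\ref{lem1}~(III) transforms into $F\binom{t_1, \ldots, t_j, 1}{n_2, \ldots, n_{j+1}}$. After re-indexing, these match the integrands of the second and fourth identities, and the corresponding summation ranges $X(n_1, \ldots, n_j, \infty)$ and $X(n_1, \ldots, n_{j+1})$ transform into $X(0, n_1, \ldots, n_{j-1}, \infty)$ and $X(0, n_1, \ldots, n_j)$, matching the right-hand sides.

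The main obstacle is not algebraic but technical: Lemma~\ref{lem1}~(II) and (III) require $0 \leq u_0 < \cdots < u_j < 1$ strictly, while Lemma~\ref{lem2} permits both $u_i = u_{i+1}$ and $u_j = 1$. Ties trivialize both sides (the integration domain has measure zero, and $F\binom{u_0, \ldots, u_j}{\cdots}$ vanishes due to a repeated column). The case $u_j = 1$ is handled by a continuity argument, exploiting that after subtracting the last column of the integrand from the others, the expression acquires column-wise factors of $(1 - t_i)$ that cancel the $(1 - t_i)^{-1}$ in the measure; the integral thus remains convergent up to $u_j = 1$, and the identity extends by continuity.
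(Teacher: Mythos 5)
Your proof is correct and follows essentially the same route as the paper, which likewise derives the five identities from Lemma \ref{lem1} parts (I), (II) with $n_1=0$, (II), (III) with $n_1=0$, and (III), respectively, via the same cofactor expansion of the leading $0$-column and reindexing. You additionally spell out the degenerate cases (ties among the $u_i$ and the endpoint $u_j=1$), which the paper leaves implicit; this is a welcome but not divergent elaboration.
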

\begin{proof}
Applying Lemma \ref{lem1}, we obtain the result.
Precisely,
the first, second, third, fourth, and fifth identities follow from (I),
the case $n_1=0$ of (II),
(II),
the case $n_1=0$ of (III),
and (III), respectively.
\end{proof}

By Lemma \ref{lem2}, we have the following recursive relations
\begin{align}\label{relation2}
\begin{split}
\zeta(k_{1}r_{1}\cdots k_{s}r_{s}(k_{s+1}+1);u_{0},\dots,u_{j}) & =\int_{D}\zeta(k_{1}r_{1}\cdots k_{s}r_{s}k_{s+1};0,t_{1},\dots t_{j})\frac{dt_{1}}{t_{1}}\cdots\frac{dt_{j}}{t_{j}},\\
\zeta(k_{1}r_{1}\cdots k_{s}r_{s}k_{s+1}\open1;u_{0},\dots,u_{j}) & =\int_{D}\zeta(k_{1}r_{1}\cdots k_{s}r_{s}k_{s+1};t_{1},\dots t_{j})\frac{dt_{1}}{1-t_{1}}\cdots\frac{dt_{j}}{1-t_{j}},\\
\zeta(k_{1}r_{1}\cdots k_{s}r_{s}k_{s+1}\up1;u_{0},\dots,u_{j}) & =\int_{D}\zeta(k_{1}r_{1}\cdots k_{s}r_{s}k_{s+1};0,t_{1},\dots t_{j})\frac{dt_{1}}{1-t_{1}}\cdots\frac{dt_{j}}{1-t_{j}},\\
\zeta(k_{1}r_{1}\cdots k_{s}r_{s}k_{s+1}\down1;u_{0},\dots,u_{j}) & =\int_{D}\zeta(k_{1}r_{1}\cdots k_{s}r_{s}k_{s+1};t_{1},\dots t_{j},1)\frac{dt_{1}}{1-t_{1}}\cdots\frac{dt_{j}}{1-t_{j}},\\
\zeta(k_{1}r_{1}\cdots k_{s}r_{s}k_{s+1}\close1;u_{0},\dots,u_{j}) & =\int_{D}\zeta(k_{1}r_{1}\cdots k_{s}r_{s}k_{s+1};0,t_{1},\dots t_{j},1)\frac{dt_{1}}{1-t_{1}}\cdots\frac{dt_{j}}{1-t_{j}},
\end{split}
\end{align}
where $j$ is the suitable nonnegative integer (different for each
equation) and the integral domain $D=D(u_{0},\dots,u_{j})$ is the same as in Lemma \ref{lem2}.  
\begin{proof}[Proof of Theorems \ref{main2} and \ref{main3}]

First, we prove Theorem  \ref{main3}.
We use an induction on $q:=k_{1}+\cdots +k_{s+1}$.
The case $q=1$ follows from the following calculation:
\[
\zeta(1;u_{0},u_{1})=\sum_{m=1}^{\infty}\frac{1}{m}F{u_{0},u_{1} \choose m}=\sum_{m=1}^{\infty}\frac{u_{1}^{m}-u_{0}^{m}}{m}=\int_{u_{0}}^{u_{1}}\frac{dt}{1-t}=J(\bullet;u_{0},u_{1}).
\]
The case $q>1$ follows from \eqref{relation1}, \eqref{relation2}, and the induction hypothesis. 
Thus we obtain Theorem \ref{main3}.

Then Theorem \ref{main2} follows from Theorem \ref{main3} since
\[
\zeta(k_{1}r_{1}\cdots k_{s}r_{s}k_{s+1})=\zeta(k_{1}r_{1}\cdots k_{s}r_{s}k_{s+1};0,1)
\]
and
\begin{align*}
&I\left( P\left(\bullet\{\up\circ\}^{k_{1}-1}r_{1}\cdots\bullet\{\up\circ\}^{k_{s}-1}r_{s}\bullet\{\up\circ\}^{k_{s+1}-1}\right)\right)\\
\quad&=J\left(\bullet\{\up\circ\}^{k_{1}-1}r_{1}\cdots\bullet\{\up\circ\}^{k_{s}-1}r_{s}\bullet\{\up\circ\}^{k_{s+1}-1};0,1\right).\qedhere
\end{align*}
\end{proof}

\section{Duality}
\subsection{Preliminary for duality}

We denote by $G$ the set of sequences $d_{1}r_{1}\cdots d_{s}r_{s}d_{s+1}$
with $d_{1},\dots,d_{s+1}\in\{\circ,\bullet\}$ and $r_{1},\dots,r_{s}\in\{\up,\down,\open,\close\}$
such that $r_{1}\cdots r_{s}$ is proper and 
\begin{align*}
&d_{1}r_{1}\notin\{\bullet\down,\ \circ\up\} ,\quad r_{s}d_{s+1}\notin\{\up\bullet,\ \down\circ\},\\
&r_{i}d_{i+1}r_{i+1}\notin\{\up\bullet\down,\ \up\bullet\close,\ \open\bullet\down,\ \open\bullet\close,\ \down\circ\up,\ \down\circ\close,\ \open\circ\up,\ \open\circ\close\}\ (1\le i\le s-1).
\end{align*}
Note that $P(u)$ is admissible 2-labeled poset for $u\in G$.
We define $\theta:H\to G$ by
\[
\theta(k_{1}r_{1}\cdots k_{s}r_{s}k_{s+1})=\bullet\{\up\circ\}^{k_{1}-1}r_{1}\cdots\bullet\{\up\circ\}^{k_{s}-1}r_{s}\bullet\{\up\circ\}^{k_{s+1}-1}.
\]
Then Theorem \ref{main2} says that 
\[
\zeta(u)=I(P(\theta(u)))
\]
 holds for $u\in H$.
 
Now, we define involutions $\tau_{\updownarrow}$, $\tau_{\leftrightarrow}$, 
and $\tau$ of $G$ by
\begin{align*}
\tau_{\updownarrow}(d_{1}r_{1}\cdots d_{s}r_{s}d_{s+1}) & =d_{1}'r_{1}'\cdots d_{s}'r_{s}'d_{s+1}',\\
\tau_{\leftrightarrow}(d_{1}r_{1}\cdots d_{s}r_{s}d_{s+1}) & =d_{s+1}r_{s}''d_{s}\cdots r_{1}''d_{1},
\end{align*}
and $\tau=\tau_{\updownarrow}\circ\tau_{\leftrightarrow}$ where 
\[
\up'=\down,\ \down'=\up,\ \open'=\open,\ \close'=\close,
\]
\[
\up''=\down,\ \down''=\up,\ \open''=\close,\ \close''=\open,
\]
\[
\bullet'=\circ,\ \circ'=\bullet.
\]
Then by definition, we have $I(P(\tau_{\updownarrow}(u)))=I(P(\tau_{\leftrightarrow}(u)))=I(P(\tau(u)))=I(P(u))$
for $u\in G$.
Thus,  for ${\bf k},{\bf l}\in H$, we have 
\[
\zeta({\bf k})=\zeta({\bf l})
\]
if $\sigma(\theta({\bf k}))=\theta({\bf l})$ with some $\sigma\in\{\tau_{\updownarrow},\tau_{\leftrightarrow},\tau\}$.
Therefore, there are possibly 3-types of dualities coming from
$\tau_{\updownarrow}$, $\tau_{\leftrightarrow}$, or $\tau$.
Note that
\begin{align}
\theta(H)=\{d_{1}r_{1}\cdots d_{s}r_{s}d_{s+1}\in G\mid\text{if \ensuremath{d_{i}=\circ}, then \ensuremath{i>1} and \ensuremath{r_{i-1}=\up}}\}.\label{theta}
\end{align}
In later subsections, we will see the followings:
\begin{itemize}
\item the duality for $\tau_{\updownarrow}$ gives no relations,
\item the duality for $\tau_{\leftrightarrow}$ gives new identities for
Schur MZVs (Corollary \ref{cor51}),
\item the duality for $\tau$ gives an alternative proof of Ohno and Nakasuji's
duality theorem (Corollary \ref{tau}).
\end{itemize}
\subsection{Nonexistence of the duality relations coming from $\tau_{\updownarrow}$}
From the definition of $\theta(H)$, we have $ d_1=\bullet$ for $d_{1}r_{1}\cdots d_{s}r_{s}d_{s+1}\in\theta(H)$ and $d_1=\circ$  for $d_{1}r_{1}\cdots d_{s}r_{s}d_{s+1}\in\tau_{\updownarrow}(\theta(H))$.
Thus, unfortunately, there is no pair $({\bf k},{\bf l})\in H^{2}$
satisfying $\tau_{\updownarrow}(\theta({\bf k}))=\theta({\bf l})$.
\subsection{A duality relation coming from $\tau_{\leftrightarrow}$}
By \eqref{theta}, we have
\begin{align*}
&\theta(H)\cap\tau_{\leftrightarrow}(\theta(H))\\
&\quad=\{d_{1}r_{1}\cdots d_{s}r_{s}d_{s+1}\in G\mid\text{if \ensuremath{d_{i}=\circ}, then \ensuremath{1<i\leq s}, \ensuremath{r_{i-1}=\up,} and \ensuremath{r_{i+1}=\down}}\}.
\end{align*}
Therefore,
\begin{align*}
T&:=\theta^{-1}(\theta(H)\cap\tau_{\leftrightarrow}(\theta(H)))\\
&=\{k_{1}r_{1}\cdots k_{s}r_{s}k_{s+1}\in H\mid\text{if \ensuremath{k_{i}\neq1}, then \ensuremath{i\leq s}, \ensuremath{k_{i}=2}, and }r_{s}=\down\}.
\end{align*}
For ${\bf k}\in T$, we define the $\tau_{\leftrightarrow}$-dual index
${\bf l}$ by ${\bf l}=\theta^{-1}(\tau_{\leftrightarrow}(\theta({\bf k})))$.
\begin{cor}[Duality by $\tau_{\leftrightarrow}$]\label{cor51}
Let ${\bf k}$ be an element of $T$ and ${\bf l}$ its $\tau_{\leftrightarrow}$-dual
index. Then we have
\[
\zeta({\bf k})=\zeta({\bf l}).
\]
\end{cor}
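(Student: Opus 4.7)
The plan is to obtain the identity by chaining together Theorem \ref{main2}, the invariance of $I\circ P$ under $\tau_{\leftrightarrow}$, and the definition of ${\bf l}$. Concretely, I would produce the sequence of equalities
\[
\zeta({\bf k})=I(P(\theta({\bf k})))=I(P(\tau_{\leftrightarrow}(\theta({\bf k}))))=I(P(\theta({\bf l})))=\zeta({\bf l}),
\]
where the first and last equalities are applications of Theorem \ref{main2}, the middle equality is the invariance of $I\circ P$ under $\tau_{\leftrightarrow}$ already recorded in the text preceding the corollary, and the third step uses ${\bf l}=\theta^{-1}(\tau_{\leftrightarrow}(\theta({\bf k})))$ by definition.

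Before writing this chain, I would verify that ${\bf l}$ is a bona fide element of $H$, so that the final application of Theorem \ref{main2} is legal. Since ${\bf k}\in T=\theta^{-1}(\theta(H)\cap\tau_{\leftrightarrow}(\theta(H)))$, we have $\theta({\bf k})\in\tau_{\leftrightarrow}(\theta(H))$. Because $\tau_{\leftrightarrow}$ is an involution on $G$, this yields $\tau_{\leftrightarrow}(\theta({\bf k}))\in\theta(H)$, and hence ${\bf l}=\theta^{-1}(\tau_{\leftrightarrow}(\theta({\bf k})))$ is a well-defined element of $H$. This check is the only place where the careful definition of $T$ really pulls its weight.

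Since every nontrivial ingredient has already been proved — the integral expression in Theorem \ref{main2} and the combinatorial observation that $P(\tau_{\leftrightarrow}(u))$ is merely $P(u)$ with its partial order reversed and its $\circ/\bullet$ labels swapped, so that substituting $t_p\mapsto 1-t_p$ identifies the two integrals — there is really no obstacle: the corollary is immediate from an assembly of these facts. The only point worth making explicit is the above well-definedness check, so that is the one step I would write out with care; everything else is a one-line application of results already on the page.
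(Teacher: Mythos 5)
Your chain
\[
\zeta({\bf k})=I(P(\theta({\bf k})))=I(P(\tau_{\leftrightarrow}(\theta({\bf k}))))=I(P(\theta({\bf l})))=\zeta({\bf l})
\]
is exactly the paper's argument (the corollary is stated there as an immediate consequence of Theorem \ref{main2} together with the invariance $I(P(\sigma(u)))=I(P(u))$), and your well-definedness check that ${\bf l}\in H$ is precisely what the definition of $T$ as $\theta^{-1}(\theta(H)\cap\tau_{\leftrightarrow}(\theta(H)))$ encodes. One small correction to your closing aside: the mechanism you describe --- order reversal plus swapping $\circ/\bullet$, realized by $t_p\mapsto 1-t_p$ --- is the one underlying $\tau_{\updownarrow}$ and $\tau$, not $\tau_{\leftrightarrow}$. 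The involution $\tau_{\leftrightarrow}$ reverses the sequence and exchanges $\up\leftrightarrow\down$ and $\open\leftrightarrow\close$ while keeping the labels $d_i$; geometrically this is the reflection $x\mapsto -x$, under which the order $(x,y)\leq(x',y')\Leftrightarrow y'-y\geq|x-x'|$ is preserved, so $P(\tau_{\leftrightarrow}(u))$ and $P(u)$ are isomorphic as $2$-labeled posets and the two integrals coincide with no change of variables at all. This does not affect the validity of your proof, since you also cite the invariance as already recorded in the text, but the stated reason should be fixed.
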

\begin{ex}
The $\tau_{\leftrightarrow}$-dual index of $(2\down1\down1)$ is
$(1\up2\down1)$. Thus $\zeta(2\down1\down1)=\zeta(1\up2\down1)$,
i.e.,
\begin{align*}
\zeta\left(
\ {\footnotesize \ytableausetup{centertableaux, boxsize=1.2em}
	\begin{ytableau}
	 1 &  1 & 2
	\end{ytableau}}\
\right)
=
\zeta\left(
\ {\footnotesize \ytableausetup{centertableaux, boxsize=1.2em}
	\begin{ytableau}
	 \none &  1 \\
	 1 & 2 
	\end{ytableau}}\
\right).
\end{align*}
The $\tau_{\leftrightarrow}$-dual index of 
\[
(1\open1\up1\up1\open2\down1\close2\down2\down1\close1\down1)
\]
is
\[
(1\up1\open2\down2\down1\open2\down1\close1\down1\down1\close1).
\]
Thus
\[
\zeta(1\open1\up1\up1\open2\down1\close2\down2\down1\close1\down1)=\zeta(1\up1\open2\down2\down1\open2\down1\close1\down1\down1\close1),
\]
i.e.,
\begin{align*}
\zeta\left(
\ {\footnotesize \ytableausetup{centertableaux, boxsize=1.2em}
	\begin{ytableau}
	\none & \none & \none & \none & \none &  1 & 1\\
	\none & \none & \none & \none & \none &  1 & 1\\
	\none & \none & \none & 1       &  2      &  1 & 1\\
	\none & 1       & 2       & 2       &  1      &  2 & 1\\
	1       & 1       & 1       & 2       &  2      &  1 & 2
	\end{ytableau}}\
\right)
=
\zeta\left(
\ {\footnotesize \ytableausetup{centertableaux, boxsize=1.2em}
	\begin{ytableau}	
	\none & \none & \none & \none & \none & \none &  \none & 1\\
	\none & \none & 1       & 2       & 1       & 2       &  2       & 1\\
	1       & 1       & 1       & 1       & 2       & 1       &  2       & 2\\
	1       & 1       & 1       & 1       & 1       & 2       & \none  & \none
	\end{ytableau}}\
\right).
\end{align*}
\end{ex}
\subsection{A duality relation coming from $\tau$}
By \eqref{theta}, we have
\begin{align*}
&\theta(H)\cap\tau(\theta(H))\\
&\quad=\{d_{1}r_{1}\cdots d_{s}r_{s}d_{s+1}\in G\mid\text{if \ensuremath{d_{i}=\circ}, then \ensuremath{1<i} and \ensuremath{r_{i-1}=\up},}\\
&\qquad\qquad\qquad\qquad\qquad\qquad\ \ \text{if \ensuremath{d_{i}=\bullet}, then \ensuremath{i\leq s} and \ensuremath{r_{i+1}=\up}}\}.
\end{align*}
Therefore,
\begin{align*}
T'&:=\theta^{-1}(\theta(H)\cap\tau(\theta(H)))\\
&=\{k_{1}r_{1}\cdots k_{s}r_{s}k_{s+1}\in H\mid k_{s+1}\ge2 \text{ and [\ensuremath{r_{i}=\up} or }k_{i}\ge2 \text{ for each $1\le i\le s$}]\}.
\end{align*}
For ${\bf k}\in T'$, we define the $\tau$-dual index
${\bf l}$ by ${\bf l}=\theta^{-1}(\tau(\theta({\bf k})))$, or equivalently, for
\[
{\bf k}=(1\up)^{a_1-1}(b_1+1)r_1 \cdots(1\up)^{a_{j-1}-1}(b_{j-1}+1) r_{j-1}(1\up)^{a_j-1}(b_j+1),
\]
the $\tau$-dual index
${\bf l}$ of {\bf k} is defined by
\[
{\bf l}=(1\up)^{b_{j}-1}(a_{j}+1)\widetilde{r_{j-1}} \cdots  (1\up)^{b_{2}-1}(a_{2}+1)\widetilde{r_1}(1\up)^{b_1-1}(a_1+1),
\]
where $a_1,b_1,\ldots,a_j,b_j\ge1$, $r_{1},\dots,r_{j-1}\in\{\up,\down,\open,\close\}$, and $\widetilde{\up}=\up,\ \widetilde{\down}=\down,\ \widetilde{\open}=\close,\ \widetilde{\close}=\open$.
\begin{cor}[Ohno-Nakasuji \cite{ON21}; Duality by $\tau$]\label{tau}
Let ${\bf k}$ be an element of $T'$ and ${\bf l}$ its $\tau$-dual
index. Then we have
\[
\zeta({\bf k})=\zeta({\bf l}).
\]
\end{cor}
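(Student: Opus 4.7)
The plan is to combine Theorem \ref{main2} with the invariance $I(P(\tau(u))) = I(P(u))$ noted in the preliminary subsection. For any $\mathbf{k} \in T'$, Theorem \ref{main2} gives $\zeta(\mathbf{k}) = I(P(\theta(\mathbf{k})))$, and by the characterization of $T'$ there is a unique $\mathbf{l}' \in H$ with $\theta(\mathbf{l}') = \tau(\theta(\mathbf{k}))$. Then
\[
\zeta(\mathbf{l}') = I(P(\theta(\mathbf{l}'))) = I(P(\tau(\theta(\mathbf{k})))) = I(P(\theta(\mathbf{k}))) = \zeta(\mathbf{k}),
\]
so the corollary reduces to showing that the explicit $\mathbf{l}$ in the statement coincides with $\theta^{-1}(\tau(\theta(\mathbf{k})))$.

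To carry out that verification I would first compute $\theta$ block-wise. Writing $\mathbf{k} = (1\up)^{a_1-1}(b_1+1)r_1 \cdots (1\up)^{a_j-1}(b_j+1)$, a direct expansion from the definition of $\theta$ shows that the $i$-th block contributes the subword $(\bullet\up)^{a_i-1}\bullet(\up\circ)^{b_i}$ to $\theta(\mathbf{k})$, with the connectors $r_i$ inserted between consecutive blocks. Next I would track the action of $\tau = \tau_{\updownarrow}\circ\tau_{\leftrightarrow}$ on individual letters: by composing the rules in the definition, $\tau$ reverses the word, swaps $\bullet \leftrightarrow \circ$, fixes $\up$ and $\down$, and swaps $\open \leftrightarrow \close$, so that each connector $r_i$ becomes exactly $\widetilde{r_i}$.

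Applying this transformation to $\theta(\mathbf{k})$ reverses the order of the blocks, turns each connector into $\widetilde{r_i}$, and converts the $i$-th reversed block into $(\bullet\up)^{b_i}\circ(\up\circ)^{a_i-1}$. Reading this word back as a $\theta$-image, each $\bullet$ in the $i$-th reversed block marks the start of a new $\bullet\{\up\circ\}^{k-1}$ segment: the leading $b_i-1$ occurrences of $\bullet$ each contribute a piece with $k=1$ followed by $\up$, while the final $\bullet$ is followed by $(\up\circ)^{a_i}$ and so corresponds to $k = a_i+1$. Assembling these pieces yields precisely the formula
\[
\mathbf{l} = (1\up)^{b_j-1}(a_j+1)\widetilde{r_{j-1}}\cdots(1\up)^{b_1-1}(a_1+1)
\]
claimed in the statement. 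The hypothesis $\mathbf{k}\in T'$ (namely $k_{s+1}\ge 2$ and, for each $i$, $r_i=\up$ or $k_i\ge 2$) is exactly what is needed for $\tau(\theta(\mathbf{k}))$ to lie in $\theta(H)$, via the characterization of $\theta(H)\cap\tau(\theta(H))$ recalled earlier.

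All analytic content is already in hand -- Theorem \ref{main2} plus the symmetry of $I(P(\cdot))$ under $\tau$ -- so there is no genuine analytic obstacle. The only nontrivial step is the combinatorial bookkeeping that matches the block decomposition of $\theta$ with the reversal and swap rules defining $\tau$; keeping track of the boundary contributions at each $\bullet$ that begins a $(\up\circ)^{\cdot}$-tail is the delicate part, but it is purely formal.
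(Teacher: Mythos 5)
Your proposal is correct and follows essentially the same route as the paper: Theorem \ref{main2} combined with the invariance $I(P(\tau(u)))=I(P(u))$ gives $\zeta(\mathbf{k})=\zeta(\theta^{-1}(\tau(\theta(\mathbf{k}))))$, and the block-wise computation you carry out (each block $(1\up)^{a_i-1}(b_i+1)$ mapping to $(\bullet\up)^{a_i-1}\bullet(\up\circ)^{b_i}$, reversed and colour-swapped to $(\bullet\up)^{b_i}\circ(\up\circ)^{a_i-1}$, hence to $(1\up)^{b_i-1}(a_i+1)$ with connectors $r_i\mapsto\widetilde{r_i}$) is exactly the ``or equivalently'' identification the paper leaves implicit. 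No gaps.
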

\begin{rem}
Corollary \ref{tau} is equivalent to the duality theorem given by Ohno and Nakasuji \cite{ON21}.
\end{rem}
\begin{ex}
The $\tau$-dual index of $(1\up2\open2\close4)$ is
$(1\up1\up2\open2\close3)$. Thus $\zeta(1\up2\open2\close4)=\zeta(1\up1\up2\open2\close3)$,
i.e.,
\begin{align*}
\zeta\left(
\ {\footnotesize \ytableausetup{centertableaux, boxsize=1.2em}
	\begin{ytableau}
	 \none & 1\\
	  2      & 2\\
	  4      & 2
	\end{ytableau}}\
\right)
=
\zeta\left(
\ {\footnotesize \ytableausetup{centertableaux, boxsize=1.2em}
	\begin{ytableau}
	 \none & 1\\
	 \none & 1\\
	  2      & 2\\
	  3      & 2
	\end{ytableau}}\
\right).
\end{align*}
\end{ex}

\section*{Acknowledgements}
This work was supported by JSPS KAKENHI Grant Numbers
 JP18K13392, JP19K14511, JP22K03244, and JP22K13897. 
This work was also supported by 
``Grant for Basic Science Research Projects from The Sumitomo Foundation'' and by ``Research Funding Granted by
The University of Kitakyushu''.


\end{document}